\newcommand{\mcm}[3]{\newcommand{#1}[#2]{{\ensuremath{#3}}}} 
\mcm{\tuple}{1}{\langle #1 \rangle}
\mcm{\name}{1}{\ulcorner #1 \urcorner}
\mcm{\Nbb}{0}{\mathbb{N}}
\mcm{\Zbb}{0}{\mathbb{Z}}
\mcm{\Rbb}{0}{\mathbb{R}}
\mcm{\Cbb}{0}{\mathbb{C}}
\mcm{\Qbb}{0}{\mathbb{Q}}
\mcm{\Acal}{0}{\cal A}
\mcm{\Bcal}{0}{\cal B}
\mcm{\Ccal}{0}{\cal C}
\mcm{\Dcal}{0}{\cal D}
\mcm{\Ecal}{0}{\cal E}
\mcm{\Fcal}{0}{\cal F}
\mcm{\Gcal}{0}{\cal G}
\mcm{\Hcal}{0}{\cal H}
\mcm{\Ical}{0}{\cal I}
\mcm{\Jcal}{0}{\cal J}
\mcm{\Kcal}{0}{\cal K}
\mcm{\Lcal}{0}{\cal L}
\mcm{\Mcal}{0}{\cal M}
\mcm{\Ncal}{0}{\cal N}
\mcm{\Ocal}{0}{{\cal O}}
\mcm{\Pcal}{0}{{\cal P}}
\mcm{\Qcal}{0}{{\cal Q}}
\mcm{\Rcal}{0}{{\cal R}}
\mcm{\Scal}{0}{{\cal S}}
\mcm{\Tcal}{0}{{\cal T}}
\mcm{\Ucal}{0}{{\cal U}}
\mcm{\Vcal}{0}{{\cal V}}
\mcm{\Wcal}{0}{{\cal W}}
\mcm{\Xcal}{0}{{\cal X}}
\mcm{\Ycal}{0}{{\cal Y}}
\mcm{\Zcal}{0}{{\cal Z}}
\mcm{\Mfrak}{0}{\mathfrak M}
\mcm{\restric}{0}{\upharpoonright}
\mcm{\upset}{0}{\uparrow}
\mcm{\onto}{0}{\twoheadrightarrow}
\mcm{\smallNbb}{0}{{\small \mathbb{N}}}
\DeclareMathOperator{\preop}{op}
\mcm{\op}{0}{^{\preop}}
\newcommand{\theoremize}[2]{\newaliascnt{#1}{thm} \newtheorem{#1}[#1]{#2} \aliascntresetthe{#1}}
\theoremstyle{plain}
\newtheorem{thm}{Theorem}[section]
\theoremstyle{definition}
\theoremstyle{plain}
\title{A Whitney type theorem for surfaces:
\newline characterising graphs with locally planar 
embeddings
}
\author{Johannes Carmesin
\medskip 
\\
  {University of Birmingham}
}
\mcm{\Fbb}{0}{\mathbb{F}}
\begin{document}

\maketitle

\begin{abstract}
 We prove that for any parameter $r$ an $r$-locally 2-connected graph $G$ embeds $r$-locally 
planarly in a surface if and 
only if a 
certain matroid associated to the graph $G$ is co-graphic.  
 This extends Whitney's abstract planar duality theorem from 1932. 
\end{abstract}

\section{Introduction}

A fundamental question in Structural Graph Theory is how to embed graphs in surfaces.
There are two main lines of research.

Firstly, in the specific embedding problem we have a fixed surface and are interested in embedding 
a given graph in that surface. 
Mohar proved the existence of a linear time algorithm for this problem, solving the algorithmic 
aspect of this problem \cite{mohar1999linear}. This proof was later simplified by Kawarabayashi, 
Mohar and Reed \cite{kawarabayashi2008simpler}. 

Secondly, in the general embedding problem, we are interested in finding for a given graph a surface 
so that the graph embeds in that surface in an optimal way. Usually people used minimum genus as the 
optimality criterion \cite{{kawarabayashi2008simpler},{MoharThomassen},{mohar2001face}}. However, 
in 1989 Thomassen 
showed that with this interpretation, the 
problem would be NP-hard \cite{thomassen1989graph}. Instead of minimum genus, here we use local 
planarity as our optimality criterion -- and provide a polynomial algorithm for the general 
embedding problem. 

While this discrepancy in the algorithmic complexity implies that maximally locally planar 
embeddings cannot always be of minimum genus, 
Thomassen showed that they are of minimum genus if all face boundaries are shorter than 
non-contractible cycles of the embedding \cite{thomassen1990embeddings}.

\vspace{.3cm}

In 1932 Whitney initiated\footnote{While I see Whitney's abstract duality theorem as the 
point of birth of matroid theory, Whitney's axioms for matroids were only published in 1935, 
see \cite{whitney_matroids}. Independently, matroids were introduced by 
Nakasawa  in 1935, see \cite{nakasawa2009axiomatics}.} the systematic study of matroids by proving 
that a graph can be 
embedded in the plane if and only if its cycle matroid is co-graphic 
{\cite[Theorem 29]{{Whitney32}}}. 
Here we work with embeddings of graphs in general surfaces that are 
just \emph{planar 
locally}; meaning that, every cycle of bounded length is generated by the face boundaries.
Analogous to the cycle matroid of a graph $G$, we define the \emph{local matroid} of $G$ 
whose circuits are those generated by cycles of $G$ of bounded length. A graph is \emph{$r$-locally 
2-connected} if for every vertex $v$ the punctured ball $B_{r/2}(v)-v$ is connected.
We prove the following extension of Whitney's duality theorem. 

\begin{thm}\label{new_main}
 Given a parameter $r\in \Nbb\cup\{\infty\}$, an $r$-locally 2-connected graph embeds 
$r$-locally planarly in a surface if and 
only if its $r$-local matroid is co-graphic.  
\end{thm}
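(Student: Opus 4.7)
The strategy is to mirror Whitney's original argument, localised through the balls $B_{r/2}(v)$.

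\emph{Necessity.} Suppose $\iota$ is an $r$-locally planar embedding of $G$ in a surface. Form the face-dual graph $H$: its vertices are the faces of $\iota$, and each edge $e \in E(G)$ becomes an edge of $H$ joining the two faces bordering $e$ (a loop when those coincide). Vertex-stars of $H$ correspond to face boundaries of $\iota$, so the bond space of $H$ equals the span of the face boundaries; by $r$-local planarity this span coincides with the cycle space of $M_r(G)$. After checking that minimal dependencies match -- where $r$-local $2$-connectedness prevents short cycles from decomposing as non-minimal face-boundary sums -- one obtains $M_r(G) = M^*(H)$, which is co-graphic.

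\emph{Sufficiency.} Conversely, suppose $M_r(G) = M^*(H)$ for some graph $H$. The plan has three steps. First, use vertices of $H$ as candidate faces; the matroid equation identifies edges of $G$ with edges of $H$, so each edge of $G$ lies in exactly two candidate faces. Second, construct a rotation at each vertex $v \in G$ by restricting attention to the ball $B_{r/2}(v)$, where $r$-local $2$-connectedness yields genuine $2$-connectedness and the restriction of the co-graphic certificate reduces the local picture to the classical Whitney duality; applying Whitney's theorem locally then produces a planar embedding of the ball and hence a cyclic rotation at $v$. Third, glue the local rotations: for adjacent vertices $u, v$ sharing edge $e$, both local rotations place $e$ between the same pair of vertices of $H$, so the orientations can be aligned consistently, producing a rotation system on all of $G$. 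The resulting $2$-cell embedding has face boundaries equal to the vertex-stars of $H$, so $r$-local planarity follows directly from $M^*(H) = M_r(G)$.

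\emph{Main obstacle.} The crux is the gluing step: reconciling the Whitney-twist ambiguity of each local planar embedding into a single globally consistent rotation system on $G$, and doing so in a way that certifies $r$-local planarity rather than merely planarity of each individual ball. The $r$-local $2$-connectedness is essential here, forcing each vertex-star of $H$ to induce a single closed boundary walk in $G$ and thereby ruling out inconsistent twists between overlapping local embeddings; making this rigorous presumably requires careful tracking of the $2$-separations inside each $B_{r/2}(v)$, and I would expect the companion paper's local $2$-separation analysis (referenced via the external document) to supply precisely the technical input needed.
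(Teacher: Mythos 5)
Your necessity direction contains a genuine gap: you assert that, by $r$-local planarity, the span of the face boundaries coincides with the cycle space of $M_r(G)$. But $r$-local planarity only gives one inclusion -- every cycle of length at most $r$ is facially generated -- and an $r$-locally planar embedding may well have faces whose boundaries are \emph{not} generated by short cycles (for instance, any embedding of a graph with no cycle of length at most $r$ is vacuously $r$-locally planar, yet its face boundaries span much more than $S_r$). These exceptional faces are exactly the ``holes'' of the paper, and because of them $M_r(G)$ is in general \emph{not} equal to $M^*(H)$ for the face-dual $H$; one must instead pass to a quotient of $H$ obtained by identifying hole-vertices, and proving that this quotient is well defined and has the right bond matroid is the technical heart of the paper (\autoref{duality_in_surfaces} together with the ``fencing in'' property of $S_r$, \autoref{is_fencing_in}, whose proof occupies an entire section with geodesic-cycle and rerouting arguments). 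Your phrase about $r$-local $2$-connectedness ``preventing short cycles from decomposing as non-minimal face-boundary sums'' addresses a different (and not the problematic) inclusion, so the co-graphicness of $M_r(G)$ is not established by your sketch.

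The sufficiency direction also does not go through as written. Restricting the co-graphic certificate to a ball $B_{r/2}(v)$ gives co-graphicness of the \emph{restriction of $M_r(G)$} to the edges of the ball, which is not the cycle matroid of the ball (the ball may contain long cycles that are not generated by short ones, and the ball need not be planar at all), so Whitney's theorem cannot be applied locally to produce a planar embedding of $B_{r/2}(v)$; and the gluing of local rotation systems, which you yourself flag as the main obstacle, is left unresolved. The paper avoids this local-to-global route entirely: it takes an abstract graph $H$ representing $M_r(G)^*$, shows that the atomic cut of each vertex of $G$ forms a cycle of $H$ (\autoref{g_cycle}, using local connectedness of $S_r$, which is where $r$-local $2$-connectedness and the companion paper's generation results enter via \autoref{is_loc_con}), attaches a disc along each such cycle to obtain a pseudo-surface, resolves the finitely many singular points into a genuine surface, and then embeds $G$ dually and verifies $S_r$-faciality directly (\autoref{construct_embedding}). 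So while your overall Whitney-type framing is the right instinct, both implications are missing the ideas (hole quotients and fencing in on one side, the disc-attachment construction from the abstract dual on the other) that make the theorem true.
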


The assumption of local 2-connectedness is not essential. Indeed, if we have a graph that is not 
locally 2-connected, one could cut locally at those local cutvertices, embed the graph via 
\autoref{new_main}, and then glue the vertices back together and adjust the embedding near these 
gluing vertices. 

We believe that this is formulated most naturally in terms of \emph{pseudo-surfaces}, which are 
obtained from surfaces by identifying finitely many points. 
While every surface is an example of a pseudo-surface, every 
embedding of a graph in a pseudo-surface can be extended to an embedding in a genuine surface.

\begin{cor}\label{mainthm_whitney_intro}
Given a parameter $r\in \Nbb\cup\{\infty\}$, a graph embeds 
$r$-locally planarly in a pseudo-surface if and 
only if its $r$-local matroid is co-graphic.  
\end{cor}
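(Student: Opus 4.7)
The strategy is to reduce \autoref{mainthm_whitney_intro} to \autoref{new_main} by a \emph{splitting} operation performed at local cutvertices. Given a vertex $v$ at which $B_{r/2}(v) - v$ has components $C_1, \ldots, C_k$, I would replace $v$ by $k$ new vertices $v_1, \ldots, v_k$, reassigning each edge incident with $v$ to the $v_i$ corresponding to the component it enters. Iterating this at all local cutvertices yields an $r$-locally 2-connected graph $G'$, the \emph{splitting of $G$}. The key matroid-theoretic claim is that this operation preserves the $r$-local matroid: a cycle of length at most $r$ in $G$ cannot cross a local cut (by definition of local cutvertex), so short cycles of $G$ correspond bijectively to short cycles of $G'$, and this should extend to an isomorphism of local matroids.

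For the ``if'' direction, suppose the $r$-local matroid of $G$ is co-graphic. Then the same matroid belongs to the $r$-locally 2-connected splitting $G'$, so \autoref{new_main} provides an $r$-locally planar embedding of $G'$ in some surface $\Sigma$. Re-identifying each group $v_1, \ldots, v_k$ back to the original vertex $v$ amounts to gluing finitely many points of $\Sigma$ together per local cutvertex, producing a pseudo-surface with a natural embedding of $G$; since the identifications occur at isolated points, the rotations around the $v_i$ are preserved and the set of short face boundaries is unchanged, so the embedding remains $r$-locally planar. Conversely, if $G$ embeds $r$-locally planarly in a pseudo-surface $\Sigma$, I would unfold $\Sigma$ at each of its singular points, simultaneously splitting $G$ at those points; after possibly rerouting edges so that no edge passes through a singular point avoided by the vertex set, the unfolded space is a genuine surface $\Sigma'$ carrying an $r$-locally planar embedding of the $r$-locally 2-connected graph $G'$. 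Then \autoref{new_main} applied to $G'$ shows that its $r$-local matroid, which equals that of $G$, is co-graphic.

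The main obstacle I anticipate is verifying that the splitting operation leaves the $r$-local matroid unchanged. This rests on a careful treatment of the circuits generated by short cycles: one must argue that no circuit of the local matroid crosses a local cut in a way that breaks the bijection between the short cycles of $G$ and those of $G'$, and that this bijection extends to an isomorphism of matroids rather than a mere set-bijection. A secondary subtlety is the compatibility of the combinatorial split at local cutvertices with the topological unfolding of pseudo-surface singular points, since a singular point of $\Sigma$ need not coincide with a local cutvertex of $G$ unless the embedding is chosen appropriately, and the argument needs a short reduction to this well-behaved case.
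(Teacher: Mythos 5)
Your strategy is the same as the paper's: the paper reduces \autoref{mainthm_whitney_intro} to the locally $2$-connected case (\autoref{r-local_duality}, the restatement of \autoref{new_main}) by locally cutting $G$ at its $r$-local cutvertices into its $r$-local blocks, quoting the companion paper both for the fact that this terminates in $r$-locally $2$-connected pieces and for the fact that it leaves the $r$-local matroid unchanged (short cycles do not cross slices, so the matroid splits as a direct sum over the blocks), and then passes between surface embeddings of the pieces and pseudo-surface embeddings of $G$ by identifying and unfolding points (\autoref{cutting_and_embeddings}, \autoref{go_to_blocks}, \autoref{r-local_duality2}). Your ``if'' direction and your matroid-preservation claim correspond exactly to these steps and are fine modulo citing that companion result rather than reproving it.

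The genuine gap is in your converse. Unfolding the pseudo-surface only at its existing singular points does not in general produce the $r$-locally $2$-connected splitting $G'$: an $r$-local cutvertex may well be embedded at a regular point (take two triangles sharing a vertex, embedded in the plane, with $r=3$ -- this embedding is $3$-locally planar, has no singularities at all, and the shared vertex is a $3$-local cutvertex), and even at a genuine singularity the partition of the incident edges into sheets need not agree with the partition into slices given by the components of $B_{r/2}(v)-v$. So the unfolded graph need not be $r$-locally $2$-connected and \autoref{new_main} cannot be applied to it; ``choosing the embedding appropriately'' or rerouting edges off singular points does not address this, because the embedding must be actively modified. This is exactly what the paper's $r$-niceness step does in the proof of \autoref{mainthm_whitney_intro}: one replaces the rotator at every $r$-local cutvertex by one rotator per slice, thereby pinching the surface so that every $r$-local cutvertex becomes an $r$-local singularity, and one then has to check that the resulting embedding is still $r$-locally planar (again because short cycles stay within a slice); only after this modification does unfolding yield surface embeddings of the $r$-local blocks to which \autoref{r-local_duality} applies. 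What you label a ``secondary subtlety'' is therefore the main missing step, and your description of it (singular points possibly not at local cutvertices) captures the easier half but not the harder one (local cutvertices at regular points, and sheet partitions that disagree with the slice partition).
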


Checking whether a given matroid is graphic can be checked in quadratic time.\footnote{See 
\cite{geelen2013characterizing} for on overview of algorithms and a short proof that this problem 
is polynomial.} 
So a consequence of \autoref{mainthm_whitney_intro} is that there is a polynomial time algorithm 
that checks 
whether a graph has a locally planar embedding.

\vspace{.15cm}

{\bf Locally planar embeddings.}
The idea of using embeddings that locally look like plane embeddings as a tool has been 
introduced\footnote{according to \cite{mohar1996planar}} by Robertson and Seymour as part of their 
Graph Minors project \cite{robertson1988graph}. 
Thomassen proved that LEW\footnote{According to \cite{thomassen1990embeddings}, LEW 
embeddings were introduced by Hutchingson in \cite{hutchinson1984automorphism}.}
embeddibility, a 
particular type of locally planar embeddibility, can be tested in polynomial time, and in fact such 
embeddings have always minimum genus  \cite{thomassen1990embeddings}. 
Since then locally planar embeddings (and their cousins `face width' and `edge width')
have become an essential part of 
Topological Graph Theory \cite{MoharThomassen}. 

For example, they are used 
in the recent project of three-colouring 
triangle-free graphs on surfaces by  Dvo{\v{r}}{\'a}k, Kr{\'a}l and 
Thomas \cite{dvovrak2009coloring}.
Very often proving a result for locally planar embeddings is already considered a major step worth 
publication \cite{{albertson2001extending}, 
{devos2008locally}, {kawarabayashi2004theorem},   
{kawarabayashi2010star},  {kawarabayashi2007chords}}.
For example DeVos and  Mohar generalised a theorem of Thomassen by proving that graphs with large 
edge-width are 5-list-colourable \cite{devos2008locally}.

\vspace{.15cm}

{\bf Duality in surfaces.}
Abrams and Slilaty have a whole series of papers in which they explore the relationship between 
surface embeddings, dual graphs and matroids. 
Based on foundational work of Zaslavsky \cite{zaslavsky1993projective}, in 
\cite{slilaty2002matroid}, Slilaty describes toridal duality and projective duality in terms of 
lift matroids and bias matroids.
In \cite{slilaty2005cographic} and \cite{abrams2003algebraic}, Abrams and Slilaty characterise 
embeddability of a graph $G$ in the projective plane via representability of the cycle matroid of 
$G$ as a bias matroid of a signed graph. 
In \cite{abrams2006algebraic}, they prove a theorem describing duality in pseudo-surfaces. 
This inspired the first half of the proof of \autoref{construct_embedding}. 
Slilaty \cite{slilaty2005cographic} asked the following: 
`Planarity of graphs precisely determines the intersection of the
class of cographic matroids with the class of graphic matroids. But
what happens when [the graph] $G$ is nonplanar?'

We derive \autoref{mainthm_whitney_intro} as a corollary of a general duality theorem for surfaces, 
\autoref{abstract_duality} below, which provides an answer to Slilaty's question. 
Indeed, given a graph $G$ and a set $S$ of its cycles, an embedding of $G$ in a 
surface is \emph{$S$-facial} if every cycle in $S$ is generated by the face boundaries. 
\begin{eg}
Every embedding is an $S$-facial embedding, where $S$ is the set of cycles generated by the face 
boundaries.  For 
$S$ equal to the cycle space of $G$, $S$-facial embeddings of $G$ are just plane embeddings. 
\end{eg}
In \autoref{abstract_duality} , we show that (under certain necessary conditions) $G$ has an 
$S$-facial embedding if and only if a certain matroid is 
co-graphic. 

\section{Duals of \emph{S}-facial embeddings}

In this section we prove \autoref{duality_in_surfaces} below, which is used in the proof of the 
main result stated in the Introduction. This lemma expresses duality in surfaces in terms of 
matroids.

Fix a field $k$ to be one of the finite fields $\Fbb_2$ or $\Fbb_3$.
\begin{rem}
 To investigate embeddings in general surfaces, take $k=\Fbb_2$. For embeddings in orientable 
surfaces, take $k=\Fbb_3$. 
\end{rem}

Given a graph $G$ embedded in a surface, a cycle of $G$ is \emph{facially generated} if it is a sum 
of face boundaries over the field $k$, where all coefficients of the sum are plus one\footnote{This 
last condition is trivial for $k= \Fbb_2$.}.

\begin{eg}
Cycles that are face boundaries are facially generated.  More generally cycles bounding discs are 
facially generated. The double torus has a cycle that separates 
its two handles; this cycle is facially generated. On the other hand, cycles cutting a handle are 
not facially generated. Contractible cycles are facially generated over $\Fbb_2$. The converse is 
not true as the 
double torus has a cycle that is facially generated (that is homologically trivial over $\Fbb_2$) 
but not 
contractible (that is homotopically trivial). 
\end{eg}

Given a graph $G$ and a subspace $S$ of its cycle space over the field $k$, we say that an 
embedding of the graph $G$ 
is \emph{$S$-facial} if all cycles of the graph $G$ that are in the vector space $S$ are facially 
generated. 

\begin{eg}
 If the vector space $S$ is equal to the cycle space of a graph $G$, then $S$-facial embeddings of 
$G$ are 
simply plane embeddings of $G$. 

Any embedding of a graph $G$ in a surface is $S$-facial, where $S$ is a subspace of the vector 
space generated over $\Fbb_2$ by the face boundaries. If the embedding is in an oriented surface, 
the faces generate the same cycles over the fields $\Fbb_2$ and $\Fbb_3$, and in fact the set $S$ 
generated by the faces is in this case the cycle space of a regular matroid. 
\end{eg}

\begin{rem}
 In an attempt to simplify notation, we will sometimes consider sets of 
$k$-vectors whose coordinates are the edges of a graph $G$, such as the subspace $S$ defined below, 
simply as 
sets of edge sets of $G$. Formally, we identify the set $S$ with the set containing 
the supports of the vectors in $S$. Usually, we indicate this by writing things like `the set 
$S$'.
\end{rem}

Fix a graph $G$, and a subspace $S$ 
of its cycle space, and an $S$-facial embedding $\iota$ of the graph $G$ in a surface $\Sigma$.
A \emph{hole} of the $S$-facial embedding $\iota$ is a face of the embedding whose boundary is 
not in the set $S$. 

\begin{eg}
 $S$-facial embeddings in which all face boundaries are in the set $S$ do not have any holes. 
\end{eg}

An \emph{area} (bounded by $S$) is a sum of faces of the embedding $\iota$ such that the sum of 
these face boundaries is in the vector space $S$ and all coefficients of the sum are plus 
one. 

\begin{eg}
 A cycle is facially generated if and only if it bounds an area. 
\end{eg}

We say that a set of holes is \emph{fenced in} by an area $A$ if it is equal to the 
set 
of all holes contained in the area $A$ (that is more formally, that are contained in the sum for 
the area $A$). 
We say that an area $A$ bounded by $S$ is \emph{minimal} if 
$A$ contains at least one hole and there is 
no other area bounded by $S$ fencing in a proper nonempty subset of the set of holes fenced in 
by $A$.

We say that a set $S$ is \emph{fencing in} for the embedding $\iota$ if the sets of holes 
fenced in by any two minimal areas bounded by $S$ are either identical or disjoint.
A set $S$ that is fencing in for any embedding, we simply call \emph{fencing in}.

\begin{eg}
If the vector space $S$ is equal to the circuit space of the graph $G$, then it is fencing in, as 
there are no holes. 

\autoref{is_fencing_in} below says that for any $r\in \Nbb$ if the vector space $S$ is generated by 
the cycles of $G$ of length at most $r$, 
then it is fencing in.
\end{eg}

\begin{eg}
Here we give an example of a set $S$ that is not fencing in. 
The graph $G$ is the ladder with eight vertices, embedded in the plane as indicated 
in \autoref{fig:ladder}. Let $S$ be the set generated by the two cycles of length six. 
Then all the three cycles of length four of the ladder are holes. And every area contains an even 
number of these $4$-cycles. As for any pair of the three four-cycles, there is an area 
containing them, the set $S$ is not fencing in. 
\end{eg}

   \begin{figure} [htpb]   
\begin{center}
   	  \includegraphics[height=1.5cm]{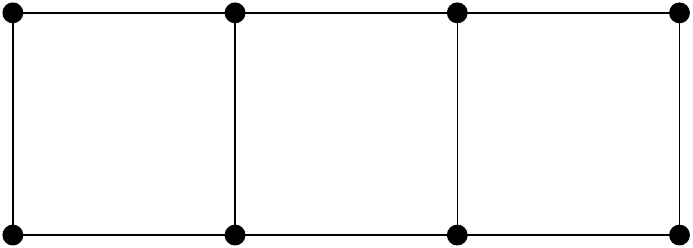}
   	  \caption{An embedding of the ladder with eight vertices in the plane. 
}\label{fig:ladder}
\end{center}
   \end{figure}

We say that a subspace $S$ of the cycle space of $G$ has a \emph{cyclic generating set} if there is 
a set of cycles of $G$ generating $S$.

\begin{eg}
 Trivially, the vector space $S$ generated by the cycles of length at most $r$ has a cyclic 
generating set.
\end{eg}

Given a subspace $S$ of the cycle space of a graph $G$ over the field $k$, the 
\emph{$S$-local} matroid for $G$ is the matroid with ground set 
$E(G)$ 
whose 
circuits are the minimal nonempty supports of elements of $S$. 

\begin{rem}
 By definition, the $S$-local matroid is $k$-represented. 
\end{rem}

\begin{eg}
 In this paper we are mostly interested in $S$-local matroids, where the subspace $S$ is the set of 
circuits generated by circuits of length at most $r$. We refer to this matroid as the 
\emph{$r$-local} matroid of the graph $G$. 
\end{eg}

Given the field $k$, we say that 
an embedding of a graph $G$ is \emph{$k$-admissible} if the sum over all face boundaries of the 
embedding with coefficient plus one is identically zero, when evaluated over the field $k$. 
\begin{eg}
 For $k=\Fbb_2$, every embedding is $k$-admissible. For $k=\Fbb_3$, the $k$ admissible embeddings 
such that all faces are discs are precisely the embeddings in orientable surfaces. 
\end{eg}

\begin{lem}\label{duality_in_surfaces}
 Let $G$ be a graph, $S$ be a nonempty subspace of the cycle space of $G$ over $k$ with a cyclic 
generating set. Let $\iota$ be an $S$-facial $k$-admissible embedding of $G$. 
Assume that $S$ is fencing in for $\iota$. 
Let $H$ be the 
dual graph of the 
embedding $\iota$.

Then there is a quotient of the graph $H$ whose cycle matroid is equal to the dual of the $S$-local 
matroid of $G$. 
\end{lem}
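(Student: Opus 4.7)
The plan is to construct a quotient $H'$ of $H$ by identifying vertices (i.e.\ faces of $\iota$) corresponding to holes that are fenced in by a common minimal area, and then to verify that the bonds of $H'$ are precisely the circuits of the $S$-local matroid of $G$. First, the $k$-admissibility of $\iota$ says that the sum of all face boundaries is $0\in S$, so the full face-set $F$ is itself an area bounded by $S$; in particular every hole is contained in some area, and hence in some minimal area. The fencing-in hypothesis then guarantees that the hole-sets of the minimal areas partition the set of all holes, and I define $H'$ by collapsing, within each block, the corresponding hole-vertices of $H$ to a single vertex and leaving all other vertices alone.

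Next, I would show that for each $c\in S$ the dual edge set $c^{*}\subseteq E(H)$ descends to a cut of $H'$. Since $\iota$ is $S$-facial, $c=\partial A$ for some area $A\subseteq F$, and in $H$ the edges $c^{*}$ are precisely those between $A$ and $F\setminus A$. For this to be a cut in $H'$ the partition $\{A,F\setminus A\}$ must refine the one defining $H'$, which amounts to the sub-claim that every area entirely contains, or entirely avoids, each minimal-area hole-class $[h]$. If this failed, then by combining $A$ with the minimal area realising $[h]$ (using the cyclic generating set to rewrite the combination as a face-sum with $+1$ coefficients over $k$ -- automatic for $k=\Fbb_2$, and for $k=\Fbb_3$ handled by swapping $A$ for its complementary area, which is also an area by $k$-admissibility), one produces an area fencing in a proper nonempty subset of $[h]$, contradicting fencing-in. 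This algebraic step is the main obstacle.

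Once $c\mapsto c^{*}$ is established as a map from $S$ to cuts of $H'$, the matroid identification follows by matching minimality. If $c$ is a minimal support in $S$ but $c^{*}$ is not a bond of $H'$, then $A$ decomposes in $H'$ into two nonempty face-classes whose $G$-boundaries each lie in $S$ with strictly smaller support, contradicting the minimality of $c$. Conversely, any bond of $H'$ is the cut between two connected pieces of a partition of $V(H')$; the associated edge set of $G$ lies in the cycle space by classical Whitney duality applied to $H$ in the surface, and, because the vertex partition respects the fencing classes, it actually lies in $S$ and is evidently a minimal such support. Hence the cycle matroid of $H'$ and the dual of the $S$-local matroid of $G$ share ground set $E(G)=E(H)$ and have the same circuits, so they coincide, as required.
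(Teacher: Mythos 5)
Your overall architecture matches the paper's: the same quotient (collapse, within the dual graph, the set of holes fenced in by a common minimal area), the same two inclusions to be checked, and the same use of $k$-admissibility to see that every hole lies in an area. But two steps do not close as written. First, the step you yourself flag as ``the main obstacle'' fails in the form you sketch. If an area $A$ contains some but not all holes of the class $[h]$ fenced in by a minimal area $D$, then the combination you propose (over $\Fbb_2$, the sum $A+D$) is indeed an area, but the set of holes it fences in is $\bigl([h]\setminus A\bigr)\cup\{\text{holes of }A\text{ not contained in }D\}$, and the second part need not be empty; so the new area need not fence in a subset of $[h]$ at all, and it contradicts neither the minimality of $D$ (which only forbids areas fencing in a proper nonempty \emph{subset} of $[h]$) nor the fencing-in hypothesis (which only constrains pairs of \emph{minimal} areas). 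Any derivation along these lines must control the extra holes picked up outside $D$; the paper does not attempt such a combination argument here but invokes fencing in directly. (A related unargued assertion in your first paragraph is ``contained in some area, and hence in some minimal area''.)

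Second, and more seriously, the reverse inclusion is only asserted. Saying that a bond of $H'$ ``actually lies in $S$ because the vertex partition respects the fencing classes'' is not an argument: respecting the classes makes the edge set a cut of $H'$, but says nothing about membership in the vector space $S$. The missing idea is the computation the paper carries out: for a collapsed vertex coming from a minimal area $D$ with boundary $o\in S$, write $D=\sum F_i$ and subtract from $o$ the boundaries of those $F_i$ that are not holes (each such boundary lies in $S$ by the definition of a hole); the difference is exactly the sum, with coefficients $+1$, of the boundaries of the holes in the class, i.e.\ the atomic cut at the collapsed vertex, which is therefore in $S$. Without this identity half of the matroid equality is unproven, and your final circuit-by-circuit matching (``each lie in $S$ with strictly smaller support'') silently uses the same fact. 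Note also that the paper sidesteps your delicate bond-versus-minimal-support matching entirely: once every generating cycle of $S$ is a cut of the quotient and every atomic cut of the quotient lies in $S$, the two $k$-represented matroids have the same circuit space and hence coincide; I recommend restructuring your last step in that way.
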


\begin{rem}\label{rem100}
\autoref{duality_in_surfaces} may be regarded as an analogue of the statement that the cycle 
matroids of dual plane graphs are dual to one another.
Indeed, if the embedding $\iota$ does not have any holes, the proof below will show that the cycle 
matroid of the dual graph $H$ is equal to the dual of the $S$-dual matroid for $G$. 
\end{rem}

\begin{proof}[Proof of \autoref{duality_in_surfaces}.]
 We start with the construction of a quotient $Q$ of the dual graph $H$. We say that two holes are 
\emph{related} if they are contained in a common minimal area bounded by $S$. 
\begin{sublem}\label{are_contained}
 Every hole is contained in an area bounded by $S$. 
\end{sublem}

\begin{proof}
By assumption, the set $S$ contains a cycle. As the embedding is $S$-facial, there is an area 
bounded by this cycle of $S$. 
Denote this area by $A$. Let $X$ be the set of all faces of the embedding $\iota$. As the 
embedding $\iota$ is $k$-admissible by assumption, the sum over the face boundaries of $X$ with 
coefficient plus one is 
identically zero over the field $k$. 
As every hole is contained in one of the areas $A$ or $X-A$, every hole is contained in an area.  
\end{proof}

By \autoref{are_contained}, `being related' is a reflexive relation (that is, all holes are related 
to themselves). It is symmetric by definition.  
So as the set $S$ is 
fencing 
in, this relation is transitive, so it defines an equivalence relation on the set of holes.
Each hole is a vertex of the graph $H$. We obtain the quotient $Q$ from the graph $H$ by 
identifying any two vertices of holes that are related. 

It remains to show that the bond matroid of the graph $Q$ is equal to the $S$-local matroid for the 
graph $G$. We denote the bond matroid of the graph $Q$ by $M(Q)$ and the $S$-local matroid by 
$M(S)$. 

\begin{sublem}\label{cycle-lem1}
Every cycle $o$ of $G$ in $S$ is in the circuit space of $M(Q)$.
\end{sublem}

\begin{proof}
As the embedding $\iota$ is $S$-facial, the cycle $o$ of $S$ is facially generated. 
Let $A'$ be an area whose boundary is the cycle $o$. 
Let $A$ be the set of vertices of the graph $H$ contained in the area; meaning that, their faces 
get the coefficient plus one in the area $A'$, and let $B$ be 
the set of other vertices of the graph $H$.
In the graph $H$, the cycle $o$ is the cut consisting of the edges from $A$ to $B$. 
Indeed, even with coefficients taken into account, the cycle $o$ is the sum over the atomic 
cuts of the vertices in $A$. 
Moreover, the 
set of holes fenced in by a minimal area is either completely contained in $A$ or $B$ as the set 
$S$ is fencing in by assumption. 
Hence the cut $o$ of the graph $H$ induces a cut of the quotient $Q$. 
To summarise, the edge set $o$ is a cut of the graph $Q$, and thus in the circuit space of 
the matroid $M(Q)$.
\end{proof}

\begin{sublem}\label{cycle-lem2}
For every atomic cut of $Q$, its characteristic vector is in $S$.
\end{sublem}

\begin{proof}
There are two types of vertices of $Q$, those that are vertices of the graph $H$ and those that 
come from identifying holes from an equivalence class. 
Atomic cuts for vertices of the first type come from faces whose boundary is in the set $S$. Hence 
(characteristic vectors of)
these atomic cuts are in the vector space $S$. 
Now let $q$ be a vertex of the quotient $Q$ coming from identifying the holes fenced in by a 
minimal area bounded by $S$. 
Let $D$ be a minimal area fencing in precisely the holes of $q$. 
Let $o$ be the boundary of the area $D$. 
Let $D=\sum F_i$ be a representation of the area $D$ as a sum of faces $F_i$.
Let $X$ be the sum of the terms $\partial(F_i)$ for faces $F_i$ that are not holes, where 
$\partial(F_i)$ denotes 
the boundary of the face $F_i$. By 
definition of holes, the difference $o-X$ is in the 
vector space $S$. 
The difference $o-X$ is equal to the sum of the face boundaries of holes -- with all coefficients 
being plus one -- in the equivalence class 
for $q$. So the (characteristic vector of the) atomic cut at $q$ is a sum of the 
characteristic vectors of atomic cuts of $H$ for the holes in $q$. So it is a vector in the vector 
space $S$. 
\end{proof}

Now we show that the $k$-represented matroids $M(Q)$ and $M(S)$ are isomorphic. For that we check 
that they 
have the same circuit space. As the set $S$ has a cyclic generating set, the circuit space $S$ of 
the matroid $M(S)$ is generated by a set of cycles of the graph $G$. By \autoref{cycle-lem1}, all 
these cycles are in the circuit space of the matroid 
$M(Q)$. So the circuit space of $M(S)$ is a subspace of the circuit space of the matroid $M(Q)$. 
As the circuit space of the matroid $M(Q)$ is generated by the atomic cuts  of the graph $Q$ (over 
the integers and 
hence also over the field $k$), by 
\autoref{cycle-lem2} the circuit space of $M(Q)$ is a subspace of $S$.
Thus the two matroids $M(Q)$ and $M(S)$ are isomorphic, completing the proof. 

\end{proof}

\section{Constructing embeddings from abstract duals}

In this section, we prove \autoref{construct_embedding} below, which is used in the proof of the 
main result stated in the Introduction. In this lemma we construct an embedding of a graph $G$ in a 
surface using an abstract dual 
graph. 

We say that a subspace $S$ of the cycle space of a graph $G$ is \emph{locally connected} if for 
every vertex $v$ of $G$ there is no vector supported at a non-empty proper subset of the atomic 
cut at $v$ that 
is orthogonal to the vector space $S$.

\begin{eg}
 If $k=\Fbb_2$, $G$ is locally connected if and only if for 
every vertex $v$ of $G$ no non-empty proper subset of the atomic cut at $v$ intersects all sets of 
$S$ evenly.
\end{eg}

\begin{dfn}
 Given a graph $G$ with a vertex $v$ and an integer $s$, the \emph{ball} of radius $s$ around the 
vertex $v$ is the induced subgraph of $G$, whose vertices are those of distance at most $s$ from 
$v$ and without all edges joining two vertices of distance precisely $s$.
Similarly, given a half-integer $s+\frac{1}{2}$, the \emph{ball} of radius $s+\frac{1}{2}$ around 
the 
vertex $v$ is the induced subgraph of $G$, whose vertices are those of distance at most $s$ from 
$v$. We denote the ball of radius $s$ around $v$ by $B_s(v)$. 
Given a parameter $r$, a vertex $v$ is an \emph{$r$-local cutvertex} if the punctured ball 
$B_{r/2}(v)-v$ is disconnected.  
\end{dfn}

\begin{lem}\label{is_loc_con}
Assume $G$ has no $r$-local cutvertex. 
 The subspace of the cycle space of $G$ generated by the cycles of length at most $r$ is locally 
connected. 
\end{lem}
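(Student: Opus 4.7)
I would argue by contradiction. Suppose $v\in V(G)$ and $w\in k^{\delta(v)}$ is a non-zero vector whose support $X$ is a proper non-empty subset of $\delta(v)$ and which is orthogonal to $S$. The plan is to exhibit a simple cycle $C$ of length at most $r$ through $v$ whose two edges at $v$ are $e_1=vu$ with $w(e_1)\ne 0$ and $e_2=vu'$ with $w(e_2)=0$. Such a $C$ lies in $S$ (by the definition of the $r$-local matroid) and, regardless of orientation, satisfies $\langle w,C\rangle = \pm w(e_1)\mp w(e_2)=\pm w(e_1)\ne 0$, contradicting $w\perp S$.

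Let $T_v$ be the neighbours of $v$, and partition $T_v$ as $A=\{u:w(vu)\ne 0\}$ and $B=\{u:w(vu)=0\}$ (for a simple graph; parallel edges at $v$ furnish a length-$2$ cycle directly and are handled separately). Both $A$ and $B$ are non-empty. Consider the auxiliary graph $N_v$ on $T_v$ with an edge $u\sim u'$ whenever $d_{G-v}(u,u')\le r-2$. If $N_v$ is connected then the non-trivial partition $A\sqcup B$ admits a crossing edge $uu'$, and appending the edges $vu$, $u'v$ to a shortest $u$-$u'$ path $P$ in $G-v$ yields the desired cycle $C$ of length at most $r$.

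To show $N_v$ is connected I would use the hypothesis that $B_{r/2}(v)-v$ is connected. For $x\in B_{r/2}(v)-v$ let $\pi(x)\in T_v$ be the second vertex of some chosen shortest $v$-$x$ path in $G$; the remainder of this path avoids $v$, so $d_{G-v}(x,\pi(x))\le d_G(v,x)-1\le \lfloor r/2\rfloor -1$. Given two neighbours $u,u'$ of $v$, pick a path $u=z_0z_1\ldots z_m=u'$ in $B_{r/2}(v)-v$ and apply the triangle inequality to see that
\[
d_{G-v}(\pi(z_i),\pi(z_{i+1}))\le d_{G-v}(\pi(z_i),z_i)+1+d_{G-v}(z_{i+1},\pi(z_{i+1}))\le r-2.
\]
For odd $r$ this is immediate; for $r=2s$ even the naive triangle inequality only gives $r-1$, and the missing unit is supplied by the fact that an edge $z_iz_{i+1}$ of $B_s(v)$ cannot have both endpoints at $G$-distance exactly $s$ from $v$. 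Hence $\pi(z_0)=u,\pi(z_1),\ldots,\pi(z_m)=u'$ is a walk in $N_v$, so $N_v$ is connected.

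The main obstacle I anticipate is precisely the sharpness of this distance bound: without the shaving coming from the definition of $B_{r/2}(v)$ for even $r$, the construction produces a cycle of length $r+1$ rather than $r$, which would be useless. Once one notices that this small improvement is exactly what the removal of edges between distance-$s$ vertices in the ball is designed for, the rest of the argument is routine bookkeeping, and the $\Fbb_3$ case is handled by the same calculation since we have arranged for $w$ to vanish on one of the two $\delta(v)$-edges of $C$.
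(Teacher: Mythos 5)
Your argument is correct, but it takes a genuinely different route from the one in the paper. The paper's proof also picks a neighbour on each side of the support of the offending vector and joins them by a path in the punctured ball $B_{r/2}(v)-v$, but that path may be long, so the resulting cycle $o$ through $v$ need not have length at most $r$; to conclude that $o$ (and hence some generator meeting the support oddly) lies in $S$, the paper invokes two results cited from the companion paper \cite{loc2sepr}, which assert that any cycle contained in such a ball is generated by cycles of length at most $r$. You avoid that external input entirely: by projecting each vertex $z$ of the punctured ball to the second vertex $\pi(z)$ of a shortest $v$--$z$ path and running the triangle inequality in $G-v$ along a path of the punctured ball, you show directly that some neighbour with $w\neq 0$ and some neighbour with $w=0$ are at distance at most $r-2$ in $G-v$, yielding a single cycle of length at most $r$ that meets the support of $w$ in exactly one edge -- your observation that the deleted edges between two vertices at distance exactly $r/2$ (for even $r$) supply the missing unit is exactly the point where the ball's definition is used, and the argument that the cycle meets $\delta(v)$ in one $w$-nonzero and one $w$-zero edge handles $\Fbb_3$ as well as $\Fbb_2$, just as in the paper. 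What each approach buys: yours is self-contained and elementary, at the cost of the metric bookkeeping and of having to treat degenerate configurations (parallel edges at $v$, very small $r$) separately, which you only gesture at -- though the paper's own proof is equally terse about the case $a=b$; the paper's proof is shorter and more robust (any path in the punctured ball works, no distance estimates), but only modulo the nontrivial generation lemmas imported from \cite{loc2sepr}. So there is no gap of substance, only the minor cleanup of the parallel-edge case and the trivial values of $r$.
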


\begin{proof}
Let $S$ denote the subspace of the cycle space of $G$ generated by the cycles of length at most 
$r$. 
 Suppose for a contradiction, there is a vector $\vec{X}$ supported at a non-empty proper subset 
$X$ of an atomic cut of a vertex 
$v$ that is orthogonal to all vectors in the vector space $S$ over the field $k$. 
Let $a$ be a neighbour of the vertex $v$ that is incident with an edge of $X$, and $b$ be a 
neighbour of the vertex $v$ that is incident with an edge incident with $v$ that is not in $X$. 

As the vertex $v$ is no $r$-local cutvertex, the punctured ball $B_{r/2}(v)-v$ contains a path from 
the 
vertex $a$ to the vertex $b$. This path together with the vertex $v$ is a cycle. Denote it by $o$. 
By construction, the cycle $o$ intersects the set $X$ precisely once. (In the case $a=b$, the cycle 
$o$ has 
size two and is chosen to contain a single edge of the set $X$.)
By {\cite[\autoref*{cycle_gen}]{{loc2sepr}}} and 
{\cite[\autoref*{gen}]{{loc2sepr}}}  the 
cycle $o$ is generated by cycles of length at most $r$.
As the cycle $o$ is not orthogonal to the vector $\vec{X}$, one of the 
generating cycles has to be non-orthogonal to the vector $\vec{X}$. As this generating cycle 
is in the set $S$, we get the desired contradiction.  
\end{proof}

\begin{lem}\label{construct_embedding}
  Let $G$ be a $2$-connected graph and $S$ be a subspace of the cycle space of $G$ over $k$.
Assume that $S$ is locally connected. 
Assume that the dual $M^*$ of the $S$-local matroid of $G$ is graphic.

Then the graph $G$ has an $S$-facial $k$-admissible embedding $\iota$ such that the dual graph of 
$\iota$ has a 
quotient whose cycle matroid is $M^*$. 

Moreover, all faces of the embedding $\iota$ are discs. 
\end{lem}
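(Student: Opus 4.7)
The plan is to invert the duality construction of \autoref{duality_in_surfaces}: I fix a graph $H$ with cycle matroid $M^*$, identify $E(H)$ with $E(G)$, and use the cycle structure of $H$ to define a combinatorial embedding of $G$.

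My first step would be to show that for every vertex $v$ of $G$ the atomic cut at $v$ is a single cycle of $H$. The atomic cut at $v$ is a cocycle of $G$, hence orthogonal to the cycle space of $G$ and in particular to $S$. Local connectivity of $S$ says no proper nonempty subset of the atomic cut at $v$ is orthogonal to $S$, so the atomic cut is a minimal nonempty cocycle of the $S$-local matroid $M$, i.e.\ a cocircuit of $M$, i.e.\ a circuit of $M^* = M(H)$. Since circuits of a graphic matroid are precisely cycles of the graph, the atomic cut at $v$ is a single cycle $C_v$ of $H$, carrying a canonical cyclic order of its edges.

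Next, taking $C_v$ as the rotation at $v$ in $G$ -- together with a consistent choice of orientation when $k = \Fbb_3$ and with edge signatures otherwise -- defines by Edmonds' theorem a combinatorial embedding $\iota$ of $G$ in a surface in which every face is an open disc. I would then trace the faces of $\iota$ and show that each face walks around a single vertex of $H$: at a corner $(v, e, e')$ with $e, e'$ consecutive in $C_v$, the edges $e, e'$ share a common endpoint $u$ in $H$; stepping along $e'$ to its other endpoint $w$ in $G$ and turning by $C_w$ continues to revolve around $u$. Hence each face boundary is a closed walk supported in the atomic cut at some $u \in V(H)$. This atomic cut is a cocircuit of $H$, hence a circuit of $M$, hence an element of $S$, so $\iota$ is $S$-facial without any holes. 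The sum of all face boundaries equals the sum of all atomic cuts of $H$, which vanishes, giving $k$-admissibility. Finally, identifying those vertices of the geometric dual of $\iota$ whose faces correspond to the same vertex of $H$ yields a quotient isomorphic to $H$, whose cycle matroid is $M^*$ by construction.

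The hard part will be the face-tracing step: showing rigorously that each face trace in $G$ stays anchored at a single vertex of $H$, and that the collection of face traces corresponding to one $u \in V(H)$ partitions the atomic cut at $u$ into the boundaries of several faces whose union, after quotienting, recovers the full atomic cut. This requires a careful corner-by-corner argument tracking which endpoint of each edge of $H$ the trace is currently circling, plus -- in the non-orientable case -- verifying compatibility of the signatures. The 2-connectedness of $G$ should prevent degenerate local behaviour in this bookkeeping.
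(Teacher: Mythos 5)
Your opening step is exactly the paper's \autoref{g_cycle}, and your rotation-system construction is the combinatorial shadow of what the paper does topologically (it attaches a disc to $H$ along each cycle $C_v$ and reads off the embedding of $G$ from the resulting pseudo-surface). But there is a genuine gap at the heart of your argument, namely where $S$-faciality is claimed. You assert that each face boundary equals the atomic cut of $H$ at some vertex $u$, hence lies in $S$, hence the embedding is ``$S$-facial without any holes''. Both steps fail. First, several faces of the embedding may revolve around the same vertex $u$ of $H$ (this happens precisely when $u$ becomes an identification point, i.e.\ when the link of $u$ in the disc complex is a disjoint union of more than one cycle); then each such face boundary is only a proper part of the atomic cut at $u$ and need not lie in $S$ at all -- these are exactly the holes, and their possible presence is the reason the lemma only asserts that a \emph{quotient} of the dual graph has cycle matroid $M^*$. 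Your own ``hard part'' paragraph, where you say the faces at $u$ \emph{partition} the atomic cut, contradicts the earlier claim. Second, and more importantly, even if every face boundary did lie in $S$, that is not what $S$-facial means: you must show that every cycle of $G$ lying in $S$ is a sum of face boundaries with coefficients $+1$. That direction is never argued in your sketch. The paper gets it by noting that a cycle $o\in S$ is a circuit of the $S$-local matroid, hence a bond of $H$, hence a cut of the dual graph $K$ of the embedding (which has $H$ as a quotient), and a cut of $K$ is the $0/1$-sum of the atomic cuts of $K$ on one side, i.e.\ of face boundaries. Some such argument, passing through the identification of cuts of the dual with cuts of $H$, is indispensable and missing.

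Two further points need attention. In the non-orientable case the face-tracing claim is only true for a \emph{correct} choice of signatures: with an arbitrary signature the trace can switch between the two endpoints of an edge of $H$ and the faces are not supported in atomic cuts of $H$; the consistent choice is exactly the embedding scheme induced by the disc attachment (orient each cycle $C_v$ arbitrarily and sign an edge according to whether the two incident discs traverse it coherently), which is the step your sketch defers and which the paper's topological construction renders automatic. Finally, minor but needed hypotheses on $H$: the paper picks a representing graph $H$ without cutvertices and without isolated vertices (so that atomic cuts of $H$ are bonds and every vertex of $H$ lies on a disc); your cocircuit claim for atomic cuts of $H$ uses this.
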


\begin{proof}
 
Let $H$ be a graph representing the dual matroid $M^*$ of the $S$-local matroid of $G$. 
For later reference, we stress that we do pick the graph $H$ such that it does not have a 
cutvertex; this is always possible. We also assume that the graph $H$ has no isolated vertices. 
The edge set of the graph $H$ is in 
bijection with the edge set of the graph $G$. 

\begin{sublem}\label{g_cycle}
 For every vertex $g\in G$ its atomic cut forms a cycle in the graph $H$. 
\end{sublem}

\begin{proof}
Let $a(g)$ be the atomic cut of a vertex $g\in G$. By construction, the characteristic vector of 
the edge set $a(g)$ is orthogonal to the vector space $S$, as this is a subspace of the cycle space 
of $G$. 
As the set $S$ is locally connected by assumption, no (characteristic vector of a) proper nonempty 
subset of the atomic cut $a(g)$ is orthogonal to the vector space $S$. 

As the bonds of the graph $H$ are given by the 
minimal non-empty supports of vectors of the vector space $S$, the characteristic vector of the 
edge set $a(g)$ is in the cycle space of the graph $H$, and no proper nonempty subset of it is. 
Hence the edge set $a(g)$ is a circuit of the matroid $M^*$, and thus the edge set of a cycle of 
the graph $H$. 
\end{proof}

Now we construct an embedding of the graph $H$ into a 
pseudo-surface; here a \emph{pseudo-surface} is obtained from a compact 2-manifold\footnote{The 
relation between 2-manifolds and surfaces is as follows. Surfaces are 
connected compact 
2-manifolds.} by identifying finitely many points, see \autoref{fig:p-surface}.

   \begin{figure} [htpb]   
\begin{center}
   	  \includegraphics[height=3cm]{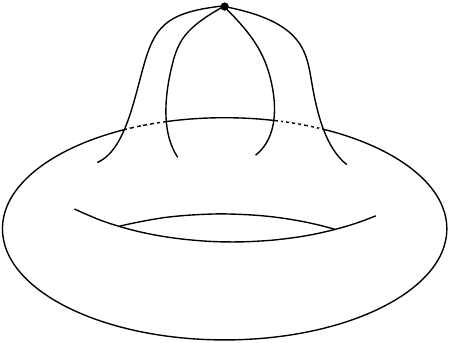}
   	  \caption{A pseudo-surface obtained from the torus by identifying two points. 
}\label{fig:p-surface}
\end{center}
   \end{figure}

Starting with the geometric realisation of the graph $H$, for each vertex $g\in G$ we attach a disc 
at its incident edges, which form a cycle of the 
graph $H$ by \autoref{g_cycle}. 
By $\Sigma$ we denote the topological space obtained from the graph $H$ by attaching these discs. 

\begin{sublem}\label{h_cycle}
 A neighbourhood around any vertex of $H$ in $\Sigma$ is obtained from finitely many disjoint 
discs by identifying their midpoints. 
\end{sublem}

\begin{proof}
Let $h$ be a vertex of the graph $H$. 
We construct an auxiliary graph whose vertex set is the set of edges incident with the vertex $h$. 
Two of these edges are adjacent in the auxiliary graph if there is a disc attached at both edges. 
As in the construction of the topological space $\Sigma$ all discs are attached at cycles of the 
graph $H$ each disc containing the vertex $h$ gives rise 
to precisely one edge of the auxiliary graph. 

As each edge of the graph $H$ is an edge of the graph $G$, it is contained in precisely two of the 
attached discs. Thus every vertex of this auxiliary graph has degree two. Hence this auxiliary 
graph is a vertex-disjoint union of cycles.
By construction of the graph $H$, the vertex $h$ is incident with an edge and hence this auxiliary 
contains at least 
one cycle.\footnote{We remark that \autoref{h_cycle} entails the statement that there is always at 
least one disc as 
neighbourhoods of a point can never be empty. } 

Hence a neighbourhood around any vertex of $H$ in $\Sigma$ is obtained from finitely many disjoint 
discs by identifying their midpoints. 
\end{proof}

As each edge of the graph $H$ is also an edge of the graph $G$, it is contained in precisely two of 
the attached discs. Hence by \autoref{h_cycle}, the topological space $\Sigma$ is a 
pseudo-surface and all identification points of $\Sigma$ are 
vertices of the graph $H$. Let $\Lambda$ be the unique compact 
2-manifold such that the 
pseudo-surface $\Sigma$ is a quotient of $\Lambda$ obtained by identifying finitely many points. 

We construct a graph $K$, which has the graph $H$ as a quotient as follows, and that embeds in the 
compact $2$-manifold $\Lambda$. 
For that, we locally cut\footnote{See \cite{loc2sepr} for a definition.} each vertex $h$ of $H$ 
that is 
an identification point of the pseudo-surface 
$\Sigma$ into 
one copy for each of its preimages in the compact 2-manifold $\Lambda$. A new vertex obtained by 
locally 
cutting $h$ is incident with those edges incident with $h$ that are embedded in the corresponding 
disc (formally, the edges incident with the vertex $h$ are partitioned into discs by 
\autoref{h_cycle}, and we have one copy of the vertex $h$ for each of these discs. This copy is 
incident with the edges meeting this disc).

Now we construct an embedding of the graph $G$ into the compact 2-manifold $\Lambda$. We embed each 
vertex of 
the graph $G$ into its disc. Then we embed half of each edge incident with a vertex $v$ 
within the 
disc for $v$ as a path joining $v$ and the midpoint of the edge of $H$ corresponding to that edge 
within the disc for $v$. The two halves of an edge meet at this midpoint of the edge of $H$. Hence 
this defines an 
embedding of the graph $G$.

Note that in the compact 2-manifold $\Lambda$, the graphs $G$ and $K$ are duals. 
By construction the dual graph $K$ of the embedding into the compact 2-manifold $\Lambda$ 
has the graph $H$ as a quotient; and this graph $H$ has the cycle matroid $M^*$, which in turn is 
the dual matroid of the $S$-local matroid for $G$. 
As the graph $G$ is connected by assumption, the 2-manifold $\Lambda$ is connected, and 
hence a surface.

Having completed, the construction of the embedding of the graph $G$ into the surface $\Lambda$, we 
 show that this embedding is $S$-facial. 
Let $o$ be a cycle of the graph $G$ that is in the set $S$. 
As the set $S$ is a subspace of the cycle space of the graph $G$, the edge set $o$ 
is a circuit of the $S$-local matroid for $G$. 
By construction of the graph $H$, 
the edge set $o$ is a bond of the graph $H$. 
So $o$ is a cut of the graph $K$, as it has $H$ as a quotient. 
So $o$ considered as a cut of $K$ is generated by the atomic cuts of $K$. As these atomic cuts are 
the faces of the embedding of $G$ in $\Lambda$, the cycle $o$ of the graph 
$G$ is generated by 
the faces of the 
embedding with all coefficients in $\{0,1\}$; that is, it is facially generated.
Hence the constructed embedding of $G$ is $S$-facial.

Next, to see $k$-admissibility, 
consider the sum over all face boundaries of faces of the 
embedding of the graph $G$ in the 2-manifold $\Lambda$ over the field $k$. 
This sum\footnote{For the field $k=\Fbb_3$, the signs of these vectors come from the representation 
of the cycle 
matroid of the graph $H$. Indeed, this gives a representation of the cycle matroid of $K$ by taking 
for the vector at an atomic cut just the restriction of the corresponding vector for the graph $H$. 
} is 
equal to the sum over the atomic cuts of the graph $K$, and hence evaluates 
to zero. 
Hence the 
embedding of $G$ in $\Lambda$ is $k$-admissible.

Finally, we check the `Moreover'-part; that is, we show that all faces of the embedding of the 
graph $G$ into the 2-manifold $\Lambda$ are discs. These faces are indexed by the vertices 
of the dual graph $K$. Let $x$ be an arbitrary vertex of $K$. The boundary of the 
face for $x$ consists of those edges that in the graph $K$ are incident with the vertex $x$. By 
construction of the embedding of the graph $G$, this face boundary is a closed trail.
It remains to show that this closed trail traverses every vertex $v$ of $G$ at most once. By 
\autoref{g_cycle} the atomic cut at $v$ intersects the atomic cut at $x$ in at most two edges. Thus 
the face boundary for $x$ can traverse the vertex $v$ at most once. Hence all faces of the 
embedding of $G$ are discs. 
\end{proof}

\section{Fencing in of holes}

In this section we prove \autoref{is_fencing_in} below, which is used in the proof of the main 
result mentioned in the Introduction. This lemma says that the set $S_r$, defined in the next 
sentence, 
is fencing in.
Given a graph $G$, we denote by $S_r$ the vector space generated by the 
cycles of $G$ of length at most $r$.

Throughout this section we abbreviate `area bounded 
by $S_r$' simply by `area'. 
And we fix a graph $G$ that is $S_r$-locally embedded in a surface. 
Additionally, for $k=\Fbb_3$ assume that the embedding is in an oriented surface.

\begin{lem}\label{boundary_to_set}
 Let $A$ be a sum of faces with empty boundary.
 Then either $A=0$ or $A$ contains all faces with the same coefficient.
\end{lem}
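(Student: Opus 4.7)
The plan is to analyse the constraint $\partial A=0$ edge by edge. Write $A=\sum_F c_F F$ as a formal $k$-linear combination of the faces of the embedding. For each edge $e$ of $G$, the coefficient of $e$ in $\partial A$ is determined by the (at most two) faces on the two sides of $e$, and I would aim to show that these two incident faces must carry equal coefficients $c_F$.

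First I would dispatch $k=\Fbb_2$: at an edge $e$ whose two sides lie on distinct faces $F_1,F_2$, the coefficient of $e$ in $\partial A$ is $c_{F_1}+c_{F_2}$, so its vanishing forces $c_{F_1}=c_{F_2}$. Next, for $k=\Fbb_3$ in an oriented surface, I would use the coherent orientation of the faces to observe that the two sides of each edge $e$ occur with opposite signs in the signed boundaries of their incident faces; consequently the coefficient of $e$ in $\partial A$ becomes $c_{F_1}-c_{F_2}$, and vanishing again forces $c_{F_1}=c_{F_2}$.

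The concluding step is a connectedness argument. Define an equivalence relation on faces by ``sharing an edge''; the two previous paragraphs show that $F\mapsto c_F$ is constant on each connected component of the dual graph of the embedding. Because $G$ is embedded in a connected surface, the dual graph is connected, so all $c_F$ are equal to a common value $c\in k$. If $c=0$ then $A=0$; otherwise every face of the embedding appears in $A$ with the same coefficient $c$, which is exactly the second alternative of the lemma.

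The point I expect to require the most care is edges whose two sides lie on the same face, which can occur when that face is not a disc. Such an edge contributes $c_F+c_F$ or $c_F-c_F$ to $\partial A$, which vanishes automatically and imposes no constraint; in the dual graph these edges are loops and do not couple coefficients of distinct faces. The key observation I would verify is that removing such loops does not disconnect the dual of an embedding in a connected surface, so the propagation step still goes through and the lemma follows.
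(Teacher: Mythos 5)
Your proposal is correct and follows essentially the same argument as the paper: propagate equality of coefficients across each edge shared by two faces (using the vanishing boundary over $\Fbb_2$, and the opposite traversals of an edge by its two incident faces in the oriented case over $\Fbb_3$), then conclude by connectedness of the surface, i.e.\ of the dual graph. Your explicit treatment of edges with the same face on both sides is a harmless refinement of the same proof.
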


\begin{proof}
Assume that the sum $A$ is non-zero at some face $f$. 
As the boundary is identically zero, every face adjacent to $f$ must appear in the sum $A$.
Continuing like this inductively, we deduce that $A$ contains all faces, as surfaces are 
connected. 
 
 If $k=\Fbb_2$, then we are done. So we may assume that $k=\Fbb_3$. 
 Here we have the additional assumption that graph embeds in an oriented surface. Hence 
the face boundaries containing each edge 
traverse that edge in opposite direction. Using this, we do not only get that all faces must appear 
in the sum $A$ but also that they must appear in the sum with the same coefficient.  
\end{proof}

We say that an area is \emph{good} if its boundary is a geodesic cycle of $G$ of length at most 
$r$. 
Recall that an area is a sum of faces with all coefficients equal to plus one such that its 
boundary is in the vector space $S_r$. 
The sum of two areas whose faces are disjoint is again an area. Similarly, if one area $A$ includes 
an area $B$, then $A-B$ is an area.

\begin{lem}\label{sum_of_good}
 Every area is a sum of good areas with coefficients in $\{-1,+1\}$.
\end{lem}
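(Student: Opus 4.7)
My plan is to decompose the boundary of $A$ into geodesic short cycles, realise each by a good area, and then correct the residual 2-chain via \autoref{boundary_to_set}.

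First I would show that $S_r$ is generated by geodesic cycles of length at most $r$ with $\pm 1$ coefficients. If a cycle $c$ of length at most $r$ fails to be geodesic, two vertices $u,v$ on $c$ are joined in $G$ by a path $P$ strictly shorter than the shorter of the two arcs of $c$ between them. Splicing $P$ into $c$ (with orientation-appropriate signs over $\Fbb_3$) expresses $c$ as a $\pm 1$ sum of two cycles, each of length strictly less than $|c|$, and hence still of length at most $r$. Induction on length now shows that every cycle of length at most $r$, and thus every vector in $S_r$, is a $\pm 1$ combination of geodesic cycles of length at most $r$.

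Writing $\partial A=\sum_i \epsilon_i c_i$ with $\epsilon_i\in\{\pm 1\}$ and the $c_i$ geodesic of length at most $r$, I use $S_r$-faciality of the embedding to produce, for each $i$, an area $B_i$ whose boundary is $c_i$; by definition each $B_i$ is a good area. Form the 2-chain $A':=A-\sum_i \epsilon_i B_i$ in the face space. Its boundary vanishes, so by \autoref{boundary_to_set} either $A'=0$ or $A'=\alpha W$ for some nonzero $\alpha\in k$, where $W$ denotes the sum of all faces. In the first case $A=\sum_i\epsilon_i B_i$ is already the required decomposition.

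In the remaining case I must express $W$ itself as a $\pm 1$ combination of good areas. Fix any good area $B$ (one of the $B_i$; or, if the indexing is empty, note that then $\partial A=0$ and $A=\alpha W$, so I can instead pick $B$ from any geodesic short cycle witnessing the nontriviality of $S_r$). The complementary 2-chain $W-B$ has coefficients in $\{0,1\}$ on faces, and its boundary $-\partial B$ is a geodesic cycle of length at most $r$, hence $W-B$ is also a good area. Therefore $\alpha W=\alpha B+\alpha(W-B)$ completes the decomposition, with all coefficients in $\{\pm 1\}$. The main technical obstacle I anticipate is the first step—carefully tracking signs when shortcutting non-geodesic cycles over $\Fbb_3$; the remainder is clean bookkeeping driven by \autoref{boundary_to_set}.
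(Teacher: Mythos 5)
Your proposal is correct and takes essentially the same approach as the paper: write $\partial A$ as a $\pm1$ combination of geodesic cycles of length at most $r$, realise each by a good area via $S_r$-faciality, and use \autoref{boundary_to_set} together with the observation that the complement $W-B$ of a good area (bounded by the reversed geodesic cycle) is again good to absorb the residual all-faces chain. The only difference is cosmetic: you append $\alpha B+\alpha(W-B)$ uniformly for $k=\Fbb_2$ and $k=\Fbb_3$, while the paper instead swaps one or two of the chosen areas $A_i$ for their complements with a short case analysis; if your appended $B$ happens to equal some $B_i$, merge or cancel the repeated term so that each good area keeps a coefficient in $\{-1,+1\}$.
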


\begin{proof}
 Let $A$ be an area and let $\alpha$ be its boundary. As $\alpha$ is in the vector space $S_r$ by 
definition, it is generated by cycles of length at most $r$, which in turn are generated by 
geodesic cycles of length at most $r$. 
Let $\alpha_1,..,\alpha_n$ be such a generating set. 
As the embedding is $S_r$-local, each cycle $\alpha_i$ bounds an area. 
Let $B$ be the sum of all these areas over the field $k$. 
The boundary of the sum $B$ is the vector 
$\alpha$.

Consider the sum $A-B$. Its boundary is identically zero. There are precisely two ways 
in which this could happen.

Firstly, $A=B$ and we are done.
Otherwise, by \autoref{boundary_to_set} the sum $A-B$ consists of all faces with the same 
coefficient. 

If $k=\Fbb_2$, modify the area bounded by $\alpha_1$ by adding $A-B$. This is again an area with 
the same boundary.
Then the resulting modified term for $B$ is equal to $A$, and we are done.

Hence it remains to consider the case $k=\Fbb_3$. 
Let $X$ be the sum over all areas with coefficient one. As the sum $X$ is equal to $A-B$ up 
to a constant, $X$ has empty boundary.  
Let $A_i$ be the area bounded by $\alpha_i$ chosen above.
Then $A_i'=X-A_i$ is an area whose boundary $-\alpha_i$. We distinguish two cases. 

{\bf Case 1:} $X=-(A-B)$.
Let $B'$ be the sum of the areas $A_i$ with `$-A_1'$' in place of the area `$A_1$'. 
Then $B'=B-X$. So $B'=A$, which gives the desired representation of $A$ as a sum. 

{\bf Case 2:} not Case 1. Then $X=A-B$. 
As we were done otherwise immediately, we assume that $n\geq 2$. 
Let $B'$ be the sum of the areas $A_i$ with `$-A_i'$' in place of the area `$A_i$' for $i=1,2$.
Then $B'=B-2X$. So $B'=A$, which completes the proof. 
\end{proof}

\begin{cor}\label{good_hole}
 Assume there is an area containing a hole. Then there is a good area containing a 
hole. 
 \end{cor}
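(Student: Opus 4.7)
The plan is to deduce the corollary directly from \autoref{sum_of_good} by tracking the coefficient of the distinguished hole.

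Let $A$ be an area containing a hole $h$; by definition of an area, the face $h$ appears in $A$ with coefficient $+1$. Applying \autoref{sum_of_good}, write
\[
A = \sum_{i=1}^n \epsilon_i G_i,
\]
where each $G_i$ is a good area and $\epsilon_i \in \{-1,+1\}$. Since every $G_i$ is an area, the coefficient of any face in $G_i$ is either $0$ or $+1$; so the coefficient of $h$ in the right-hand side equals $\sum_{i : h \in G_i} \epsilon_i$, regarded as an element of $k$.

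Comparing coefficients of $h$ on the two sides, we get $\sum_{i : h \in G_i} \epsilon_i = 1$ in $k$. In particular the sum on the left is nonzero, so the index set $\{i : h \in G_i\}$ is nonempty. Pick any such index $i$; then $G_i$ is a good area containing the hole $h$, and we are done.

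There is no real obstacle here: the only thing one needs to verify is that the coefficient comparison actually forces some $G_i$ to contain $h$, and this follows immediately because an empty sum in $k$ equals $0 \ne 1$. The argument works uniformly for $k=\Fbb_2$ and $k=\Fbb_3$, since all that matters is that $1\neq 0$ in $k$.
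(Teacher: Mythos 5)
Your proposal is correct and follows the same route as the paper: apply \autoref{sum_of_good} to decompose the area $A$ into good areas and observe that some summand must contain the hole $h$. The paper states this last step without the explicit coefficient comparison; your bookkeeping of the coefficient of $h$ simply makes that step precise.
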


\begin{proof}
 Let $A$ be an area containing a hole $h$. By \autoref{sum_of_good} the area $A$ is a sum of good 
areas. Hence one of these summands must contain the hole $h$. 
\end{proof}

\begin{lem}\label{topo_reroute1}
 Let $G$ be a graph embedded in a surface $\Sigma$. Let $\ell$ be a natural number. 
 Let $A$ be a closed subset\footnote{This just says that $A$ is an area. We write it this way to 
stress that here we consider $A$ as a topological object with its boundary rather than a 
combinatorial object.} of $\Sigma$ whose boundary is a geodesic cycle of $G$ of length at most 
$\ell$.
 Let $o$ be a cycle included in $A$ of length at most $\ell$. 
 Then there is a family of geodesic cycles of 
length at most $\ell$ included in the set $A$ such that their sum evaluates to $o$ over $\Zbb$.
\end{lem}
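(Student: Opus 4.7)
The plan is to argue by strong induction on the length $|o|$ of the cycle $o$. If $o$ is itself a geodesic cycle of $G$ we are done by taking the singleton family $\{o\}$, so assume $o$ is not geodesic. Then there exist vertices $u,v$ on $o$ such that the shorter $uv$-arc $o_1$ of $o$ satisfies $|o_1| > d(u,v)$ in $G$; fix a $uv$-geodesic $P$ in $G$ realising this distance.

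The first step is to replace $P$ by a geodesic $P'$ contained in $A$. If $P \subseteq A$, set $P':=P$; otherwise, since distinct edges of $G$ do not cross in the surface, every edge of $P$ either lies in $A$ or in the exterior of $A$, and we may decompose $P$ into maximal subpaths contained in $A$ and maximal subpaths contained in the exterior. Each exterior subpath runs between two vertices $x,y$ of $\beta := \partial A$ and, being a subpath of a geodesic, has length exactly $d(x,y)$. Since $\beta$ is a geodesic cycle of length at most $\ell$, its shorter $xy$-arc also has length $d(x,y)$ and lies in $A$. Replacing every exterior subpath of $P$ by the corresponding shorter $\beta$-arc produces a $uv$-walk $P'$ in $A$ of length $|P|=d(u,v)$. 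Any repetition of a vertex in $P'$ would yield a strictly shorter $uv$-walk, contradicting $|P'|=d(u,v)$; thus $P'$ is a simple $uv$-geodesic path contained in $A$.

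Let $o_2$ be the other $uv$-arc of $o$, oriented appropriately. Set $C_1 := o_1 - P'$ and $C_2 := P' + o_2$; both are closed $1$-chains supported in $A$, and $o = C_1 + C_2$ over $\Zbb$. Since $|P'|=d(u,v)<|o_1|\le |o_2|$, each $C_i$ has total length at most $|o_i|+|P'|<|o_1|+|o_2|=|o|\le \ell$. Decomposing each $C_i$ as a signed sum of simple cycles of $G$ (each of length at most $|C_i|<|o|$ and supported in $A$) and applying the induction hypothesis, with the same $A$ and $\ell$, to each of these simple cycles expresses each $C_i$, and hence $o$, as a signed sum of geodesic cycles of length at most $\ell$ included in $A$.

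The main obstacle is the rerouting step: one must ensure that $P$ can always be replaced by a geodesic that stays inside $A$. This is precisely where the hypothesis that $\partial A$ is a geodesic cycle of length at most $\ell$ is used, since it guarantees that any exterior excursion of $P$ between vertices $x,y$ of $\beta$ can be swapped for a same-length arc of $\beta$. The remaining sign bookkeeping, namely decomposing the $1$-chains $C_i$ into signed simple cycles, is standard once this inductive structure is in place.
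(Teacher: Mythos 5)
Your argument is correct, and it takes a genuinely different route from the one in the paper. The paper starts from the standard fact that any cycle of length at most $\ell$ is an integer sum of geodesic cycles of length at most $\ell$ (ignoring $A$ at first), keeps only those generators that meet the closed set $A$, reroutes their excursions outside $A$ along the geodesic boundary cycle $\alpha$, and then observes that the modified sum differs from $o$ only on $\alpha$, so that subtracting a suitable multiple of $\alpha$ finishes the proof. You instead prove a relativised version of the generation statement directly, by strong induction on $|o|$: if $o$ is not geodesic you take a shortcut geodesic $P$ between two vertices of $o$, push $P$ into $A$ by swapping its exterior excursions for same-length arcs of the geodesic boundary, split $o$ into the two strictly shorter closed chains $o_1-P'$ and $P'+o_2$, and recurse after a conformal decomposition into simple cycles. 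What your approach buys: every cycle you output really is geodesic (in the paper's argument the rerouted generators are only guaranteed to have length at most $\ell$, not to be geodesic, so the stated conclusion there needs a further word), everything stays inside $A$ throughout, and you never need the final correction step nor the observation that a cycle-space element supported on $\alpha$ is a multiple of $\alpha$. What the paper's approach buys is brevity, since it leverages the known generation of the cycle space by short geodesic cycles rather than re-deriving it. The only points in your write-up that deserve explicit care are ones you already handle: that the endpoints of the maximal exterior subpaths of $P$ are vertices on $\partial A$ (which uses that edges of the embedded graph cannot cross the boundary cycle), and the length bookkeeping $|P'|=d(u,v)<|o_1|\le|o_2|$ guaranteeing that both chains, and hence all simple cycles in their decompositions, have length strictly less than $|o|$ and at most $\ell$, so the induction terminates.
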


\begin{proof}
We denote the bounding cycle of the set $A$ by $\alpha$. 
Clearly the cycle $o$ is a sum of geodesic cycles of length at most $\ell$. 
Let $\Fcal$ be the subfamily of these generating cycles that  either are included in the closed 
subset $A$, or intersect its boundary 
$\alpha$.
As the boundary $\alpha$ is a geodesic cycle, paths outside the set $A$ whose endvertices are on 
the 
cycle $\alpha$, can be replaced by subpaths of the cycle $\alpha$ that are not longer. Now we 
manipulate all the geodesic cycles in the family $\Fcal$ in this way.
Consider the sum of this modified family. This sum is included in the set $A$. And in the interior 
of $A$ it coincides with the cycle $o$. So its difference to $o$ is supported at the boundary 
$\alpha$ of $A$. As this difference must be in the cycle space, it must be a multiple of the 
boundary 
cycle $\alpha$. Hence subtracting a suitable multiple of $\alpha$ from the modified family gives a 
way to write the cycle $o$ as a sum of geodesic cycles of length at most $\ell$ that are included 
in $A$. 
\end{proof}

We say that an area $A$ \emph{includes} an area $B$ if
if all faces contained in $B$ are also contained in $A$; that is, the sum $B$ is a subsum of the 
sum $A$. 

\begin{lem}\label{topo_reroute2}
 Let $A$ be a good area.
 Let $o$ be a cycle included in $A$ of length at most $r$. 
 Then there is an area included in $A$ bounded by $o$.
 \end{lem}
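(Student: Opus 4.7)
The plan is to decompose $o$ via \autoref{topo_reroute1}, bound each resulting piece by an area sitting inside $A$, and assemble these into a single area in $A$ whose boundary is $o$.

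First, I would apply \autoref{topo_reroute1} with $\ell = r$ to obtain a $\Zbb$-linear combination $o = \sum_i c_i \gamma_i$, where each $\gamma_i$ is a geodesic cycle of length at most $r$ contained in $A$. Since each $\gamma_i$ belongs to $S_r$, the $S_r$-facial assumption guarantees that $\gamma_i$ bounds some area in $\Sigma$. The key step is to show that this bounding area can be chosen inside $A$. The idea is topological: each $\gamma_i$ is a simple closed curve null-homologous in $\Sigma$ (which is oriented when $k = \Fbb_3$), hence $\gamma_i$ separates the surface into two pieces; since $\gamma_i$ is disjoint from $\alpha := \partial A$, the boundary cycle $\alpha$ lies entirely in one piece, and any attempted path from the other piece to $\Sigma\setminus A$ would have to cross $\alpha$, contradicting that $\alpha$ is on the opposite side of $\gamma_i$. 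Taking $D_i$ to be the sum of the faces on the side of $\gamma_i$ not containing $\alpha$ then yields an area contained in $A$ with $\partial D_i = \pm \gamma_i$, and one absorbs the sign into the coefficient $c_i$.

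Next, assemble the 2-chain $B := \sum_i c_i D_i$. By construction it is supported inside $A$ and satisfies $\partial B = o$. Over $\Fbb_2$ all coefficients of $B$ automatically lie in $\{0,1\}$, so $B$ is the desired area. Over $\Fbb_3$ one has to normalise any face-coefficient that happens to be $-1$; this can be handled by the same flipping technique used in the proof of \autoref{sum_of_good}, exploiting $k$-admissibility of the embedding (equivalently, orientability in this setting) to rewrite selected $D_i$'s in terms of complementary areas while keeping the support inside $A$ and preserving the boundary.

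The main obstacle is the topological step of producing $D_i \subseteq A$. Particular care will be needed for degenerate configurations --- for instance $\gamma_i$ sharing vertices or edges with $\alpha$, or $\gamma_i$ coinciding with $\alpha$ --- and it is here that the goodness of $A$ (that $\alpha$ is a geodesic cycle of length at most $r$) plays its role: it ensures $A$ behaves as a bona fide ``topological container'' for cycles of length at most $r$, so that the ``side not containing $\alpha$'' argument goes through cleanly.
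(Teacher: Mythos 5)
There are genuine gaps in both key steps of your plan. First, the containment $D_i\subseteq A$: your separation argument tacitly assumes that each geodesic cycle $\gamma_i$ is disjoint from the bounding cycle $\alpha$ of $A$, so that ``the side of $\gamma_i$ not containing $\alpha$'' makes sense and lies inside $A$. But \autoref{topo_reroute1} only guarantees that the $\gamma_i$ are included in the \emph{closed} set $A$, and in fact its proof produces them precisely by rerouting paths along $\alpha$, so in the typical case the $\gamma_i$ share edges with $\alpha$ or may even coincide with $\alpha$. In that situation there is no well-defined ``side not containing $\alpha$'', and invoking goodness of $A$ does not repair the argument; you have effectively reduced the lemma to its special case for geodesic cycles and then left that special case unproved in the generic configuration. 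Second, the $\Fbb_3$ normalisation: the flipping technique of \autoref{sum_of_good} replaces an area by its complement with respect to the sum of \emph{all} faces of the surface, which necessarily uses faces outside $A$ and so destroys exactly the containment you need; the complement inside $A$, namely $A-D_i$, instead changes the boundary by $\alpha$. Moreover the integer coefficients $c_i$ coming from \autoref{topo_reroute1} can overlap on faces and produce residues equal to $-1$ (or the chain $\sum_i c_i D_i$ can fail to have all coefficients in $\{0,+1\}$), and you give no concrete mechanism that fixes this while preserving both the boundary $o$ and the support inside $A$.

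For comparison, the paper's proof avoids the decomposition entirely. Since $o$ has length at most $r$ and the embedding is $S_r$-facial, $o$ bounds \emph{some} area $B$ (not necessarily in $A$). Let $C$ be the sum of the faces of $B$ lying outside $A$; because $\partial B=o$ is included in $A$, the boundary of $C$ can be nonzero only on $\alpha$, hence $\partial C=x\cdot\alpha$ for some $x\in k$. If $x=0$ then $B-C$ is the desired area; if $x=-1$ then $A-(B-C)$ works; and the case $x=1$ over $\Fbb_3$ is excluded via \autoref{boundary_to_set}, since $A-C$ would then be a nonzero chain with empty boundary whose coefficients are $+1$ on a face inside $A$ and $-1$ on one outside. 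You may wish to redo your attempt along these lines, or else prove honestly the special case of the lemma for geodesic cycles meeting $\alpha$, which is where the real work in your approach lies.
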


\begin{proof}
 As the cycle $o$ has length at most $r$ and the embedding is $S_r$-local, the cycle $o$ is the 
boundary of an area. Denote that area by $B$. 
Let $C$ be the sum (with coefficient one) of those
 faces of $B$ that are not included in the area $A$. 
As the boundary of the area $B$ is included in the area $A$, also the boundary of the sum $C$ must 
be included in the area $A$. Thus the boundary of the sum $C$ can only take nonzero values at the 
bounding cycle of the area $A$. As the only such vectors in the cycle space are multiples of 
the bounding cycle of the area $A$, the boundary of the sum $C$ has the form $x\cdot \alpha$, 
where $x$ is a coefficient in the field  $k$ and $\alpha$ is the bounding cycle of the area 
$A$.

If $x=0$, then the area $B-C$ is bounded by $o$, and we are done.

If $x=-1$, then the sum $X=A-(B-C)$ has the boundary $-o$; that is, the 
cycle $o$ with the reverse orientation. So  $X$ is the desired area.

Hence it remains to show that the case $x=1$ and $k=\Fbb_3$ is not possible. 
Then the sum $X=A-C$ has empty boundary. 
As the set $A$ is nonempty and disjoint from $C$ by construction, by \autoref{boundary_to_set} the 
sum $X$ takes the same non-zero value at all faces. As the area $A$ is bounded by a cycle, there is 
a face in $A$ and one outside. The sum $X$ takes the value plus one at the face inside and the 
value 
minus one outside. This is a contradiction, completing the proof.
\end{proof}

\begin{lem}\label{curve_in_surface}
 Let $\Sigma$ be a surface and let $B$ be a closed subset whose boundary $b$ is a circle.
 Let $\gamma$ be a curve intersecting the circle $b$ in finitely many points but not in its 
endvertices.
 Then precisely one of the endpoints of the curve $\gamma$ is in the set $B$ if and only if the 
curve $\gamma$ intersects the cycle $b$ oddly.  
\end{lem}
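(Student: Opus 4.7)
The plan is a direct parity argument along $\gamma$. Parametrize $\gamma$ as $\gamma\colon [0,1]\to\Sigma$ with neither $\gamma(0)$ nor $\gamma(1)$ on $b$, and let $0<t_1<t_2<\cdots<t_n<1$ enumerate the preimages $\gamma^{-1}(b)$. Write $t_0:=0$ and $t_{n+1}:=1$, and consider the open sub-intervals $I_j:=(t_j,t_{j+1})$ for $j=0,\dots,n$. Each image $\gamma(I_j)$ is a connected subset of the disjoint open decomposition $\Sigma\setminus b=\operatorname{int}(B)\sqcup(\Sigma\setminus B)$ (this decomposition holds because $B$ is closed with topological boundary $b$). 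Hence each $\gamma(I_j)$ lies entirely in one of the two open pieces; assign $\epsilon_j\in\{0,1\}$ recording which one.

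The statement of the lemma is equivalent to $\epsilon_0\neq\epsilon_n$ if and only if $n$ is odd, so it suffices to show $\epsilon_{j-1}\neq\epsilon_j$ for every $j$. For this, choose a disc neighbourhood $U$ of the crossing point $p:=\gamma(t_j)\in b$ in $\Sigma$ such that $U\cap b$ is a single arc through $p$; such $U$ exists because $b$, being a circle embedded in the surface $\Sigma$, is locally a $1$-submanifold. This arc separates $U$ into exactly two components, and because $b=\partial B$, one component is contained in $\operatorname{int}(B)$ and the other in $\Sigma\setminus B$. Shrink the interval around $t_j$ so that $\gamma$ meets $b$ only at $t_j$ inside it; then the arc of $\gamma$ just before $t_j$ and just after $t_j$ each lies on one side of the local arc $U\cap b$, and these sides are opposite, so $\epsilon_{j-1}\neq\epsilon_j$. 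A straightforward induction then yields $\epsilon_n=\epsilon_0$ iff $n$ is even, completing the equivalence.

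The main obstacle is the last part of the local step: the alternation $\epsilon_{j-1}\neq\epsilon_j$ genuinely requires $\gamma$ to \emph{cross} $b$ at $p$ and not merely touch it tangentially on one side. Under the hypothesis that $\gamma\cap b$ is a finite set, the natural interpretation is to treat each intersection point as a transverse crossing; otherwise a tangential touch would contribute one point to the count without switching sides and the conclusion would fail. The cleanest fix is to take ``intersects the circle $b$ in finitely many points'' to mean transverse intersections, or equivalently to count each point with its mod-$2$ crossing index, so that the tally on the right-hand side matches the number of side changes on the left.
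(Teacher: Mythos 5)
Your argument is correct and is essentially the paper's own: the paper disposes of this lemma with a one-line ``induction on the number of intersection points'', which is exactly your parity argument (tracking which side of $b$ each segment of $\gamma$ lies on and showing the side alternates at each crossing) written out in full. Your caveat that the intersection points must be read as transverse crossings (or counted with their mod-$2$ crossing index), since a tangential touch would add to the count without switching sides, is a fair sharpening of an imprecision the paper leaves implicit and which its applications justify by perturbing $\gamma$.
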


\begin{proof}[Proof:]
 by induction on the number of intersection points of the curve $\gamma$ and the boundary $b$.
\end{proof}

\begin{lem}\label{minimal_good}
  Assume there is a good area containing a hole. Then there is a good area that is fencing in.
\end{lem}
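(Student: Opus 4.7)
The plan is a minimisation argument. Among all good areas containing at least one hole, pick one, call it $A$, that minimises the number $|H(A)|$ of holes it contains (where $H(A)$ denotes the set of holes fenced in by $A$). Such an $A$ exists by the hypothesis of the lemma. I claim $A$ is fencing in. Suppose for contradiction that it is not; then there exists an area $B$ bounded by $S_r$ whose hole set satisfies $\emptyset \neq H(B) \subsetneq H(A)$.

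The next step is to produce from $A$ and $B$ a new area whose hole set is a proper nonempty subset of $H(A)$. Consider the sum $A + B$ over the field $k$: its boundary is $\partial A + \partial B \in S_r + S_r = S_r$. Over $\mathbb{F}_2$ the sum has coefficients in $\{0,1\}$ and is immediately an area. In the oriented ($\mathbb{F}_3$) case one has to be more careful; here one uses the $k$-admissibility of the embedding together with \autoref{boundary_to_set} to replace $A+B$, if necessary, by its complementary sum of faces so that it is again an area bounded by $S_r$. Either way, the resulting area has hole set exactly $H(A)\setminus H(B)$, which is a proper nonempty subset of $H(A)$, and in particular of strictly smaller cardinality.

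Now I would apply \autoref{sum_of_good} to decompose this new area as a signed sum $\sum_i \epsilon_i C_i$ of good areas with $\epsilon_i \in \{-1,+1\}$. Since it contains at least one hole $h \in H(A)\setminus H(B)$, at least one summand $C_i$ must contain a hole. The final and crucial step is to upgrade such a $C_i$ to a good area that lies inside $A$ with hole set contained in $H(A)\setminus H(B)$. For this, I would combine \autoref{topo_reroute1} — which allows me to rewrite the boundary of $C_i$, inside $A$, as a sum of geodesic cycles of length at most $r$ lying inside $A$ — with \autoref{topo_reroute2}, which produces, for each such cycle inside $A$, a good sub-area of $A$ bounded by it. Keeping the sub-area whose hole set is nonempty, one obtains a good area with $|H(\cdot)| \leq |H(A)\setminus H(B)| < |H(A)|$, contradicting the minimal choice of $A$.

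The main obstacle is the last step: ensuring that the good summand $C_i$ coming from \autoref{sum_of_good} can be replaced by a good sub-area of $A$ whose hole set is genuinely a subset of $H(A)\setminus H(B)$, and not merely a good area that happens to contain one hole of $A$. In the oriented case the signs in the decomposition of \autoref{sum_of_good} have to be tracked carefully, and \autoref{curve_in_surface} is likely needed to justify that a geodesic cycle rerouted into $A$ encloses precisely the expected set of holes of $A$. The rest of the argument is bookkeeping around the minimisation.
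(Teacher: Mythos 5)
Your overall strategy (minimise the number of holes over good areas, then derive a contradiction from a witness area that splits the holes of $A$) is the same as the paper's, but the step you yourself flag as ``the main obstacle'' is exactly where the proposal breaks down, and the tools you name do not close it. \autoref{topo_reroute1} and \autoref{topo_reroute2} only apply to cycles that are \emph{already included in the closed set $A$}; the good summands $C_i$ produced by \autoref{sum_of_good} are arbitrary good areas in the surface, and their boundaries need not lie in $A$, nor intersect $A$ in any controlled way. So there is no justification for ``rewriting the boundary of $C_i$ inside $A$''. What the paper actually does at this point is quite different: it joins a hole in $A\cap B$ to a hole in $A\setminus B$ by a curve $\gamma$ inside $A$ avoiding $\partial A$, shows (via \autoref{topo_reroute2} and the minimal choice of $A$) that $\partial B$ cannot be contained in $A$, extracts a subpath $P$ of $\partial B$ meeting $\partial A$ only in its endpoints and crossing $\gamma$ oddly, and closes $P$ with a geodesic subpath $Q$ of $\partial A$. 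The geodesicity of \emph{both} bounding cycles is what bounds the length of $PQ$ by $r$; only then do \autoref{topo_reroute1}, \autoref{topo_reroute2} and the parity lemma \autoref{curve_in_surface} yield a good area inside $A$ separating the two chosen holes. None of this separating-cycle construction appears in your proposal, and without it the contradiction does not materialise.

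Two further points. First, your contradiction hypothesis is too weak: ``$A$ is not fencing in'' means there is an area containing \emph{some} hole of $A$ and avoiding \emph{some} hole of $A$; that area may well contain holes outside $A$, so you cannot assume $\emptyset\neq H(B)\subsetneq H(A)$, and consequently the claim that $A+B$ has hole set exactly $H(A)\setminus H(B)$ fails in general. Second, the formation of the auxiliary area over $\Fbb_3$ is not as easy as stated: on faces lying in both $A$ and $B$ the sum $A+B$ has coefficient $2$, and adding the all-faces vector supplied by $k$-admissibility does not restore coefficients in $\{0,1\}$; the paper avoids this by never forming $A+B$ and instead working with areas included in $A$ (where subtraction of a subsum is legitimate). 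So the proposal, as it stands, has a genuine gap at its decisive step.
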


\begin{proof}
 By assumption, there is a good area containing a hole. 
 Pick a good area containing as few holes as possible but at least one.
 Call it $A$. It remains to prove that the area $A$ is fencing in.

Suppose not for a contradiction. Then there is an area $C$ such that it contains a hole of the 
area $A$ and avoids a hole of $A$. 
\begin{sublem}\label{is_goood}
 There is a good area $B$ that contains a hole of $A$ and avoids a hole of $A$. 
\end{sublem}

\begin{proof}By \autoref{sum_of_good} the 
area $C$ is a sum of good areas. 
Let $C'$ be the sum of such good areas that contain some hole of $A$. 
Suppose for a contradiction that all summands of $C'$ take the same coefficient at all 
holes of $A$. 
Then in
the sum $C'$ all holes appear with the same multiplicity. By construction, each holes of $A$ 
appears with the same multiplicity in the sums $C'$ and $C$. This is a contradiction as in $C$ two 
holes of $A$ have a different multiplicity.
Hence there is a good area that has different multiplicities at holes of $A$. As areas 
can only have multiplicity one and zero at holes, there is a  a good area that contains a hole 
of $A$ and avoids a hole of $A$. 
\end{proof}

By \autoref{is_goood}, there is a  good area $B$ that contains a hole of $A$ and avoids a hole of 
$A$. 
Pick a hole $h_1$ that is in both areas $A$ and $B$, and a hole $h_2$ that is only in the area $A$. 
As the area $A$ is bounded by a cycle, it is connected.
So there is a curve $\gamma$ included in the closed set $A$ from the hole $h_1$ to the hole $h_2$ 
such that 
it does not intersect the bounding cycle of the good area $A$. By modifying the curve $\gamma$ 
locally if necessary, we may assume, and we do assume, that $\gamma$ intersects the bounding cycle 
of the good area $B$ only in finitely many points. 

Denote the bounding cycle of the good area $A$ by $a$.
Denote the bounding cycle of the good area $B$ by $b$.

\begin{sublem}\label{a_b_inter}
 The cycles $a$ and $b$ intersect in at least two points.
\end{sublem}

\begin{proof}
As the curve $\gamma$ joins a hole in $B$ with a hole outside, the curve $\gamma$ has to intersect 
the cycle $b$; in particular $b$ contains a point in the interior of the area $A$. 
Suppose for a contradiction that the cycle $b$ is 
included in the area $A$. Then by \autoref{topo_reroute2}, there is an area $B'$ included in the 
area  $A$ 
and bounded by $b$. 
Consider the sum $B-B'$. This sum has empty boundary. 
By \autoref{boundary_to_set}, either this sum is identically zero or it contains all faces with 
the same multiplicity. 
So either the areas $B$ and $B'$ agree or they 
partition the 
faces.
In either case, the area $B'$ contains fewer holes than $A$ but precisely one of $h_1$ and $h_2$.
Note that the area $B'$ is good. 
This is a contradiction to the choice of the area $A$. 

So the cycle $b$ contains a 
point outside the area $A$. Applying \autoref{curve_in_surface} to two subpaths of the cycle $b$ 
between vertices inside $A$ and outside $A$, yields that the cycles $a$ and $b$ have to intersect 
in at least two points. 
\end{proof}

\begin{sublem}\label{P_exists}
 There is a subpath $P$ of the cycle $b$ that intersects the cycle $a$ precisely in 
its endvertices and that intersects the curve $\gamma$ in an odd number of points. 
\end{sublem}

\begin{proof}
For each intersection point $y$ of the curves $b$ and $\gamma$ pick the unique subpath of the cycle 
$b$ that contains the point $y$ and that intersects the cycle $a$ precisely in its endvertices; 
this is well defined by \autoref{a_b_inter}. 
Denote that path by $P_y$. Two paths $P_y$ are either identical or intersect at most in their 
endvertices. If all paths $P_y$ intersected the curve $\gamma$ evenly, then the cycle $b$ 
would intersect the curve $\gamma$ evenly. This is not possible as precisely one of the holes 
$h_1$ and $h_2$ is in the area $B$ by \autoref{curve_in_surface}. Hence one path $P_y$ must 
intersect the curve $\gamma$ oddly. Pick such a path for $P$. 

\end{proof}

Let $P$ be a path as in \autoref{P_exists}. 
Denote the two endvertices of the path $P$ by $x$ and $y$. Let $Q$ be a shortest path between the 
vertices $x$ and $y$ included in the geodesic boundary cycle $a$ of the good area $A$. 
As the length of the geodesic path $Q$ cannot be longer than any $x$-$y$-path included in the 
bounding cycle $b$ of the good area $B$, the length of the cycle $PQ$ is at most the length of the 
bounding cycle of $B$; and thus at most $r$.

As the path $P$ contains an interior point of the area 
$A$, and it intersects its boundary only in its endvertices, the connected path $P$ is included in 
the area $A$. So the cycle $PQ$ is included in the area $A$. 

By \autoref{topo_reroute1}, the cycle $PQ$ can be written as a sum of geodesic cycles of length at 
most $r$ that are included in $A$. As the cycle $PQ$ intersects the curve oddly, one of the 
generating geodesic cycles has to intersect the curve $\gamma$ oddly. Pick such a geodesic cycle 
and denote it by $x$.

By \autoref{topo_reroute2} there is an area $X$ included in the area $A$ whose boundary is the 
geodesic cycle $x$. By construction the area $X$ is good.
As the curve $\gamma$ intersects the cycle $x$ oddly, the area $X$ contains precisely one of the 
holes $h_1$ and $h_2$ by \autoref{curve_in_surface}. Thus the area 
$X$ contains a hole. As all holes of the area $X$ are holes of the area $A$ but $X$ does not 
contain 
the hole $h_1$ or $h_2$, the existence of the good area $X$ is a contradiction to the choice of the 
area $A$. 
Thus the area $A$ must be fencing in. 
\end{proof}

We introduce the following notation to state the next lemma for the fields $\Fbb_2$ and $\Fbb_3$ 
simultaneously.
An \emph{$\Fbb_3$-oriented} embedding is an embedding in an oriented surface. Every embedding of a 
graph in a surface is \emph{$\Fbb_2$-oriented}.  

\begin{lem}\label{is_fencing_in}
The vector space $S_r$ generated by the cycles of length at most $r$ over $k$ is fencing in (for 
every 
$S_r$-facial $k$-oriented embedding). 
\end{lem}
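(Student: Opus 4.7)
The plan is to deduce the lemma from \autoref{minimal_good} by iteratively partitioning the set of holes into strongly fencing-in blocks, and then showing that every minimal area's hole set is exactly one such block. Suppose for contradiction there are two minimal areas $M_1, M_2$ bounded by $S_r$ whose hole sets $H_1, H_2$ are neither identical nor disjoint.

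First I apply \autoref{minimal_good} to obtain a good area $A_0$ whose hole set $T_0$ is strongly fencing in, in the sense that every area either contains all of $T_0$ or none of it. I then observe the following dichotomy for each minimal area $M$: either holes$(M) = T_0$, or holes$(M) \cap T_0 = \emptyset$. Indeed, strong fencing in leaves only the alternatives $T_0 \subseteq$ holes$(M)$ or $T_0 \cap$ holes$(M) = \emptyset$, and strict inclusion $T_0 \subsetneq$ holes$(M)$ would make $A_0$ fence in a proper nonempty subset of holes$(M)$, contradicting the minimality of $M$. Applying this dichotomy to $M_1$ and $M_2$: if either $H_i = T_0$, then a brief case analysis on $H_j$ forces the pair to be either equal or disjoint, contradicting our assumption. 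Hence both $H_1$ and $H_2$ must be disjoint from $T_0$.

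To handle this remaining case I iterate on the residual problem restricted to holes outside $T_0$. The key sub-step is that for every hole $h^\ast$ outside $T_0$ there exists a good area containing $h^\ast$ whose hole set avoids $T_0$: among good areas containing $h^\ast$, pick one of minimum hole count and call it $G$; if holes$(G)$ contained $T_0$ strictly, then \autoref{minimal_good}'s argument applied to $G$ would show holes$(G)$ to be strongly fencing in, but then $A_0$ would fence in the proper subset $T_0 \subsetneq$ holes$(G)$, a contradiction; hence by strong fencing in of $T_0$ we must have holes$(G) \cap T_0 = \emptyset$. Running \autoref{minimal_good}'s argument among good areas whose hole sets avoid $T_0$ yields a further strongly fencing-in block $T_1$ disjoint from $T_0$; iterating this produces a partition $T_0, T_1, \ldots, T_n$ of all holes into strongly fencing-in blocks, each being the hole set of some good area.

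The dichotomy applied to each $T_j$ forces each $H_i$ to equal some $T_{j_i}$, and then $H_1$ and $H_2$ are either equal (when $j_1 = j_2$) or disjoint (when $j_1 \neq j_2$), contradicting our assumption. The main obstacle is the iteration step, specifically verifying that \autoref{minimal_good}'s proof transfers to the residual problem after removing previously-handled blocks: the topological tools of \autoref{topo_reroute1}, \autoref{topo_reroute2}, and \autoref{curve_in_surface} carry over unchanged, while the combinatorial ``block removal'' is performed by summing with $A_i$ in characteristic $2$, and via the complementary area $A_i^c$ (whose boundary $-\partial A_i$ lies in $S_r$) on oriented surfaces in characteristic $3$.
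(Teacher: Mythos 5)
Your opening reduction is sound: by the minimality of a minimal area $M$, the strongly fencing-in hole set $T_0$ of the good area from \autoref{minimal_good} satisfies the dichotomy $\mathrm{holes}(M)=T_0$ or $\mathrm{holes}(M)\cap T_0=\emptyset$, and if you really had a partition of the relevant holes into such blocks, the conclusion would follow. The genuine gap is in the coverage sub-step of your iteration. You claim that if $G$ is a good area containing $h^{\ast}$ with fewest holes among good areas containing $h^{\ast}$, then ``\autoref{minimal_good}'s argument applied to $G$'' shows $\mathrm{holes}(G)$ to be strongly fencing in. But the proof of \autoref{minimal_good} reaches its contradiction by producing a good area (the area $B'$ inside \autoref{a_b_inter}, or the area $X$ at the end) that is included in $A$, contains at least one hole, and has strictly fewer holes than $A$; this contradicts the choice of $A$ only because $A$ was hole-minimal among \emph{all} good areas containing a hole. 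Under your relativised minimality the produced area would have to contain $h^{\ast}$ to give a contradiction, and nothing forces that: taking $h_1\in T_0$ and $h_2=h^{\ast}$, the constructed area may contain $h_1$ and avoid $h^{\ast}$. So the statement ``every hole outside $T_0$ lies in a good area whose hole set avoids $T_0$'' is unproven, and without it your blocks $T_1,\dots,T_n$ need not cover the holes of $H_1\cap H_2$, so the final dichotomy cannot be applied.

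The difficulty you are skating over is exactly what the paper's proof is built to handle. If a good area $D$ containing $h^{\ast}$ has $T_0\subsetneq\mathrm{holes}(D)$, you would like to ``remove the block'', but $A_0$ is in general not included in $D$ (only its holes are), and your proposed fix -- summing with $A_0$ over $\Fbb_2$, or with a complementary area over $\Fbb_3$ -- produces areas that are no longer good: their boundaries are sums of geodesic cycles rather than a single geodesic cycle, and \autoref{topo_reroute1}, \autoref{topo_reroute2} and the parity argument via \autoref{curve_in_surface} all rely on a single geodesic boundary circle, so they do not ``carry over unchanged''. The paper resolves this by surgery: cut out the fencing-in good area along its geodesic boundary, glue in a disc, and induct on the number of holes of the new embedded subgraph $G'$, with \autoref{transfer_principle} doing the nontrivial bookkeeping between areas of $G$ and areas of $G'$ (note also that the relevant cycle space changes, since cycles through the interior of the removed area disappear -- a point an iteration on the fixed surface ignores). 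Your outer skeleton could likely be salvaged, but only by proving something of the strength of \autoref{transfer_principle}, which is where the real content of the lemma lies.
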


\begin{proof}
We prove this by induction on the number of holes.
The induction starts where there is no area containing holes; then the statement is true 
as there 
are no minimal areas.
So suppose there is an area containing holes.
By \autoref{good_hole}, there is also a good area that contains a hole. 
By \autoref{minimal_good} there is an area that is bounded by a geodesic cycle and is fencing in. 
Denote that area by $A$ and its bounding cycle by $\alpha$. 

Now construct a new embedding from the old embedding by deleting all faces 
contained in the area $A$ -- including all vertices of the graph $G$ that are in the interior of 
the area $A$ -- and attaching a disc at the cycle $\alpha$. 
Denote the new embedded subgraph of $G$ by $G'$. 

As the area $A$ contains a hole, this new embedding of this subgraph has strictly fewer holes. 
We denote the vector space generated by cycles of length at most $r$ in the graph $G'$ by 
$S_r'$. 
Hence by induction, the set $S_r'$ for this new embedded graph $G'$ is fencing in. 

Areas of the graph $G'$ are also areas of the graph $G$ after removing the newly added disc with 
boundary $\alpha$ of the embedded graph $G'$ if necessary. 
The following lemma allows us to transform areas of the graph $G$ into areas of the subgraph 
$G'$. Here we remark that in \autoref{transfer_principle} the `$+$' 
denotes addition of areas in the graph $G$. In particular, in the 
corresponding term we add the face $A$ as a face of the graph $G$ with all its holes. 

\begin{sublem}\label{transfer_principle}
Assume there is a face not contained in the area $A$. 
 For every area $B$ of $G$, there is an area $B'$ that is an area for the graphs $G$ and $G'$ 
such that one of $B'$ or $B'+A$ contains 
the same holes as the area $B$. 
\end{sublem}

\begin{proof}
Let $\hat B$ be the sum (with coefficients one) over all faces of the area $B$ that are contained 
in the area $A$. We want to show that the sum $\hat B$ is an area; that is, its boundary is in the 
set $S_r$. 
Denote the boundary of the area $B$ by $b$. 

Our first step will be to construct from the boundary $b$ a candidate 
for the boundary of the sum $\hat B$, as follows.
As the boundary $\alpha$ of the area $A$ is a geodesic cycle, paths outside the set $A$ whose 
endvertices are on 
the 
cycle $\alpha$, can be replaced by subpaths of the cycle $\alpha$ that are not longer.
Applying this to the boundary $b$, gives a closed walk $\gamma$ of length at most $r$ that agrees 
with the boundary $b$ on the interior of the area $A$ and is included in $A$. 
By \autoref{topo_reroute2}, there is an area $C$ bounded by $\gamma$ and included in $A$. 

Now consider the difference $\hat B-C$. Its boundary is a difference of the boundaries of $\hat B$ 
and $C$. As these two boundaries are included in $A$ and agree on the interior of $A$, the boundary 
of $\hat B-C$ is supported on the boundary of the area $A$. As the boundary is in the cycle space, 
it must be a multiple of the bounding cycle $\alpha$ of $A$.
Thus there is a coefficient $x$ in $k$ such that
$\hat B-C+x\cdot A$ has empty boundary. As there is a face outside the area $A$ by assumption, this 
sum is not supported on all faces. Hence by \autoref{boundary_to_set}, we have that
\[
 \hat B-C+x\cdot A=0
\]

If $k=\Fbb_2$, then $\hat B=C-x \cdot A$ is an area. Then $B'=B-\hat B$ is an area that uses no 
faces of the area $A$. As the area $A$ is fencing in, the area $\hat B$ contains either all holes 
of $A$ or none. So $B'$ has the desired properties. 
So it remains to consider the case that $k=\Fbb_3$. 

If $x=0$, then $\hat B=C$, so $\hat B$ is an area. Arguing as above, we conclude that  
$B'=B-\hat B$ is the desired area.

If $x=1$, then $-\hat B= A-C$. As $A-C$ is an area and the sum $\hat B$ has never the coefficient 
minus one, we conclude that $\hat B=0$. So $B'=B$ is the desired area.

If $x=-1$, then $\hat B= A+C$. Then faces contained in the area $C$ have coefficient minus one on 
the right. As $\hat B$ has no such coefficients, the area $C$ must be empty. So $\hat B= A$. 
Then $B'=B-A$ is the desired area.
\end{proof}

Having finished the proof of \autoref{transfer_principle}, we show that $S_r$ is fencing in.
Let $B$ be a minimal area. 
If the area $B$ has the same holes as the area $A$, then it is fencing in, and we are done.
So we may assume, and we do assume, that the area $B$ contains a hole that is not 
contained in the minimal area $A$.
As the area $B$ is minimal, it cannot contain any hole of the area $A$, as $A$ is fencing in. 
By \autoref{transfer_principle}, there is an area $B'$ of $G'$ such that one of $B'$ or $B'+A$ 
contains the same holes as the area $B$. 
Let $C'$ be a minimal area of $G'$ containing a hole of the area $B$. By induction, the 
area $C'$ is fencing 
in for $G'$. In particular, all its holes are contained in the 
area $B'$. Let $C''$ be the area obtained from $C'$ by setting the coefficient of the newly added 
disc with 
boundary $\alpha$ of the embedded graph $G'$ to zero. Note that $C''$ is an area of the graph $G$ 
with the same holes as the area $C'$ of $G'$. 

So the area $C''$ contains no hole outside the 
minimal area $B$. By minimality of $B$, the areas $B$ and $C''$ contain the same holes. 
By replacing the area `$B$' by `$C''$' if necessary, we assume that $B=B'$ and $B'$ is fencing 
in 
for the graph $G'$. 

Now let $D$ be an arbitrary area of the graph $G$.
By \autoref{transfer_principle}, there is an area $D'$ of $G'$ such that one of $D'$ or 
$D'+A$ contains the same holes as the area $D$.
As the area $B'$ is fencing in for $G'$, the area $D'$ either contains no hole of $B'$ or all holes 
of $B'$. 
As the area $B$ contains no hole of the area $A$, the area $D$ either contains no hole of $B$ or 
all holes of the area $B$. As the area $D$ was arbitrary, the area $B$ is fencing in. This 
completes 
the induction step.
\end{proof}

\section{Abstract duality for surfaces}

In this section we deduce the main result stated in the Introduction from the lemmas proved in the 
previous sections. 

First we prove the following analogue of Whitney's abstract duality theorem for 
general surfaces.

\begin{thm}\label{abstract_duality}
 Let $G$ be a $2$-connected graph, and $S$ be a nonempty subspace of its cycle space over 
$\Fbb_2$ with a cyclic 
generating set.
 Assume $S$ is locally connected and fencing in.
 
 Then $G$ has an $S$-facial embedding into a surface if and only if the $S$-local matroid for $G$ 
over $\Fbb_2$ 
is co-graphic.
\end{thm}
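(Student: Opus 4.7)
The plan is to derive \autoref{abstract_duality} directly from the two main lemmas of the preceding sections, \autoref{duality_in_surfaces} and \autoref{construct_embedding}, by specialising to $k=\Fbb_2$. Working over $\Fbb_2$ is convenient because every embedding of a graph in a surface is automatically $\Fbb_2$-admissible (as recorded in the example following the definition of $k$-admissibility), so the $k$-admissibility hypothesis of those lemmas is free.

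For the forward direction, I would assume an $S$-facial embedding $\iota$ of $G$ into a surface exists. Since $S$ is nonempty with a cyclic generating set, is fencing in by assumption, and $\iota$ is $\Fbb_2$-admissible, \autoref{duality_in_surfaces} applies and yields a quotient $Q$ of the dual graph whose cycle matroid equals the dual of the $S$-local matroid $M(S)$ of $G$. Thus $M(S)^*$ is graphic, which is precisely the statement that $M(S)$ is co-graphic.

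For the backward direction, I would assume the $S$-local matroid $M(S)$ is co-graphic, so its dual $M^* = M(S)^*$ is graphic. Since $G$ is 2-connected and $S$ is locally connected by hypothesis, \autoref{construct_embedding} applies and produces an $S$-facial $\Fbb_2$-admissible embedding $\iota$ of $G$ whose dual graph has a quotient with cycle matroid $M^*$; moreover, the target of $\iota$ is a surface (not merely a pseudo-surface), since the construction in that lemma lifts the pseudo-surface through the compact 2-manifold $\Lambda$ and connectedness of $G$ forces $\Lambda$ to be connected, hence a surface.

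Because both halves are essentially citations of lemmas already proved, there is no real obstacle in this deduction; the only care needed is verifying that the hypotheses of each lemma match those of the theorem and that the $\Fbb_2$-admissibility condition is indeed automatic, which requires nothing beyond the observation recorded in the example. The substantive content of the theorem sits in the two lemmas themselves rather than in their combination.
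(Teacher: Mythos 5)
Your proposal is correct and matches the paper's own proof: both directions are obtained by citing \autoref{duality_in_surfaces} (forward) and \autoref{construct_embedding} (backward) with $k=\Fbb_2$, where admissibility is automatic. Your extra remark that the construction lands in a genuine surface (via connectedness of $G$ forcing $\Lambda$ to be connected) is a point the paper handles inside \autoref{construct_embedding} rather than in the theorem's proof, but it is the same argument.
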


\begin{proof}
Since the set $S$ is 
fencing in and has a nonempty cyclic generating set by assumption, by 
\autoref{duality_in_surfaces}, if $G$ 
has an $S$-facial embedding into a surface, then the $S$-local matroid for $G$ 
is co-graphic. 

Since the set $S$ is locally connected by assumption, by \autoref{construct_embedding} if the 
$S$-local matroid for $G$ 
is co-graphic, then 
$G$ has an $S$-facial embedding.
\end{proof}

We say that an embedding of a graph $G$ in a (pseudo-) surface is $r$-locally planar if 
all cycles of length at most $r$ are generated by the face boundaries over the field $k$. Note that 
a graph is 
$r$-locally planar 
if and only if 
it is $S_r$-facial, where $r$ is the vector space generated by the cycles of $G$ of length at most 
$r$. 

\begin{cor}\label{r-local_duality}
 Any $r$-locally 2-connected graph $G$  has an 
$r$-locally 
planar embedding if and only if the 
$r$-local matroid for $G$ over $\Fbb_2$ is 
co-graphic.
\end{cor}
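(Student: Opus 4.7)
The plan is to deduce this corollary by directly applying Theorem \ref{abstract_duality} with the subspace $S$ specialized to $S_r$, the span over $\Fbb_2$ of the cycles of $G$ of length at most $r$. By construction $S_r$ has a cyclic generating set, its associated $S$-local matroid is by definition the $r$-local matroid, and the paragraph immediately preceding the corollary identifies $r$-locally planar embeddings with $S_r$-facial embeddings. Once the hypotheses of Theorem \ref{abstract_duality} are verified for $S_r$, both sides of the corollary match both sides of the theorem.

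Two of those hypotheses are supplied by earlier lemmas in the paper. Lemma \ref{is_loc_con} gives that $S_r$ is locally connected, using that an $r$-locally 2-connected graph has no $r$-local cutvertex. Lemma \ref{is_fencing_in} gives that $S_r$ is fencing in. What remains is the global 2-connectedness hypothesis of the theorem, which is not literally implied by $r$-local 2-connectedness: the latter is a condition on small neighbourhoods and permits global cutvertices.

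The main obstacle is therefore the reduction from $r$-local 2-connectedness to global 2-connectedness, and I would handle it by passing to the block decomposition of $G$. Every cycle of length at most $r$ of $G$ lies in a single block, so the $r$-local matroid of $G$ is the direct sum of the $r$-local matroids of its blocks, and co-graphicity is preserved under direct sums. Likewise, $G$ admits an $r$-locally planar embedding if and only if each of its blocks does (embed each block separately and re-glue at the cutvertices, as discussed in the remark after \autoref{new_main}). Each block is 2-connected, and inherits $r$-local 2-connectedness from $G$, so Theorem \ref{abstract_duality} applies block by block. The degenerate blocks in which $S_r$ is the zero subspace (no short cycle) are handled separately: every embedding is trivially $r$-locally planar and the $r$-local matroid is free, hence co-graphic, so the equivalence holds vacuously.
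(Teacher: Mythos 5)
Your core application of \autoref{abstract_duality} with $S=S_r$, backed by \autoref{is_fencing_in} and \autoref{is_loc_con}, is exactly the paper's route. The divergence is in how you obtain the $2$-connectedness hypothesis, and here your premise is mistaken: for a connected graph, $r$-local $2$-connectedness \emph{does} imply $2$-connectedness (for $r\geq 2$). Indeed, if $v$ were a cutvertex of a connected $G$, then $v$ has neighbours in at least two components of $G-v$; these neighbours lie in the punctured ball $B_{r/2}(v)-v$, and any path between them inside that ball would be a path in $G-v$, which does not exist. So a global cutvertex is automatically an $r$-local cutvertex. The paper therefore simply reduces to connected components, observes that each component is $2$-connected, treats the degenerate case with no cycle of length at most $r$ separately, and applies \autoref{abstract_duality} directly; no block decomposition occurs because under the hypotheses every component is its own (unique) block.

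The block-decomposition detour you propose is not only unnecessary but, as written, contains a genuine gap: the equivalence ``$G$ has an $r$-locally planar embedding in a surface if and only if every block does'' is nowhere established in the paper for genuine surfaces. The remark after \autoref{new_main} that you cite is an informal aside about pseudo-surfaces, and the machinery the paper actually develops for such reductions (\autoref{cutting_and_embeddings}, \autoref{go_to_blocks}) works with $r$-local blocks and produces $r$-nice embeddings in \emph{pseudo-surfaces}, not surfaces; re-gluing block embeddings at a shared cutvertex naively yields a pseudo-surface, and upgrading this to a surface embedding while preserving facial generation of short cycles (as well as the converse direction, restricting an embedding to a block) would require an argument you do not supply. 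Since the hypothesis of the corollary makes all of this moot, the fix is simply to drop the detour and note the implication above.
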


\begin{proof}
It suffices to prove this theorem for every connected component of the graph $G$. Hence we may 
assume, and we do assume, that the graph $G$ is connected, and so $2$-connected by the local 
connectivity assumption. 
If the graph $G$ has no cycle of length at most $r$, any embedding is $r$-locally planar, so there 
is nothing to prove. So we assume that the graph $G$ has a cycle of length at most $r$. 

 Let $S$ be the set of all edge sets generated by the cycles of the graph $G$ of length at most 
$r$. By assumption the set $S$ is nonempty. 
Clearly $S$ is a subspace of the cycle space of $G$ with a cyclic generating set. 
By \autoref{is_fencing_in}, the set $S$ is fencing in. 
As $G$ is $r$-locally $2$-connected, by \autoref{is_loc_con}, the set $S$ is 
locally connected. By 
definition, an $S$-facial embedding 
is simply an $r$-locally planar embedding. The $r$-local matroid is identical to the 
$S$-local 
matroid. 

Hence by \autoref{abstract_duality}, the graph $G$ has an $r$-locally planar embedding if and 
only 
if the $r$-local matroid for $G$ is 
co-graphic.
\end{proof}

Actually, \autoref{r-local_duality} can be extended beyond the locally 2-connected case. In 
order to prove this, we do some preparation. 
A \emph{pseudo-manifold} is a topological space obtained from a compact 2-dimensional manifold by 
identifying finitely many points.
\begin{eg}
 A connected pseudo-manifold is a pseudo-surface. 
\end{eg}

We refer to the finitely many identification points of a pseudo-manifold as 
\emph{singularities}.
Given a graph $G$ embedded in a pseudo-manifold $\Sigma$, a vertex $v$ embedded to a singularity of 
$\Sigma$ is an \emph{$r$-local singularity} if the point $v$ in $\Sigma$ has a neighbourhood $O$ 
such that two edges incident with $v$ intersect the same connected component of $O-v$ if and only 
if their endvertices aside from $v$ are in the same connected component of the punctured ball 
$B_{r/2}(v)-v$. 
An embedding of a graph in a pseudo-manifold is \emph{$r$-nice} if its singularities are 
all $r$-local singularities.

\begin{lem}\label{cutting_and_embeddings}
Let $G$ be a graph and $G'$ be the graph obtained from $G$ by $r$-locally cutting a 
vertex $v$ of $G$.
 Then $G$ embeds $r$-locally planarly $r$-nicely in a pseudo-manifold with $v$ being an $r$-local 
singularity if and only if $G'$ embeds $r$-locally planarly $r$-nicely in a pseudo-manifold.
\end{lem}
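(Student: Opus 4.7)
The plan is to establish both implications via a local topological cut-and-paste at the vertex $v$, and then verify that $r$-local planarity and $r$-niceness transfer under this operation. The key observation is that $r$-locally cutting $v$ combinatorially corresponds, in an $r$-nice embedding with $v$ an $r$-local singularity, to topologically separating the singularity at $v$ into one copy per sheet.

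For the forward direction, suppose $G$ is $r$-locally planarly and $r$-nicely embedded in a pseudo-manifold $\Sigma$ with $v$ an $r$-local singularity. By the definition of $r$-local singularity, there is a neighbourhood $O$ of $v$ whose components of $O \setminus \{v\}$ are in bijection with the connected components of $B_{r/2}(v) - v$, with the edges at $v$ partitioned accordingly. Separate $v$ into one copy per such component to obtain a pseudo-manifold $\Sigma'$ into which the resulting graph embeds; by matching partitions, this graph is exactly $G'$. Faces of the embedding are either disjoint from $v$ (unchanged) or incident with $v$ (split among the copies in the obvious way), so the set of face boundaries, viewed as edge subsets, is preserved. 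Other singularities of $\Sigma$ are unaffected and remain $r$-local; the new copies of $v$ sit in a single disc each, so they are regular points. Hence $\Sigma'$ is $r$-nice.

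For the backward direction, given an $r$-nice $r$-locally planar embedding of $G'$ in $\Sigma'$, identify the copies of $v$ to a single point to obtain a pseudo-manifold $\Sigma$ in which this point has one sheet per copy. By construction, the sheets at $v$ correspond to the $r$-local components at $v$, so $v$ is an $r$-local singularity of $\Sigma$, and other singularities of $\Sigma'$ remain $r$-local in $\Sigma$; thus $\Sigma$ is $r$-nice with $v$ an $r$-local singularity. In either direction, the nontrivial check is $r$-local planarity, and the crux is the following observation: any cycle $o$ of length at most $r$ in $G$ that passes through $v$ lies entirely in $B_{r/2}(v)$, so the two neighbours of $v$ on $o$ both lie in the punctured ball $B_{r/2}(v) - v$ and are joined there by the rest of $o$; hence they are in the same connected component of $B_{r/2}(v) - v$. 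Consequently, $o$ lifts to a cycle through a single copy $v_i$ of $v$ in $G'$, of the same length. Combined with the fact that faces correspond under the cut/glue operation, this gives a bijection between cycles of length at most $r$ in $G$ and in $G'$ that preserves facial generation; $r$-local planarity therefore transfers in both directions.

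The main obstacle is precisely this cycle-lifting step, which is the essential reason the definitions of $r$-local cutting, $r$-local singularity, and $r$-local planarity are all calibrated to the radius $r/2$; once it is in place, the rest is bookkeeping that exploits the locality of the construction at $v$.
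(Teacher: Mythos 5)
Your proposal is correct and follows essentially the same route as the paper: cut or glue the embedding at the ($r$-local) singularity for $v$, note that faces and their boundaries are unchanged as edge sets, and transfer $r$-local planarity because cycles of length at most $r$ survive the operation. The cycle-lifting observation you prove directly (a cycle of length at most $r$ through $v$ stays in $B_{r/2}(v)$, so its two $v$-neighbours lie in one component of the punctured ball) is exactly the fact the paper invokes more tersely as ``by definition of $r$-local cutting''.
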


\begin{proof}
First assume that  $G'$ embeds $r$-locally planarly $r$-nicely in a pseudo-manifold. We obtained an 
embedding 
of the graph $G$ by identifying all slices of the vertex $v$. This defines an embedding of the 
graph $G$ in a pseudo-manifold with $v$ being an $r$-local singularity. 
This embedding is $r$-nice by construction. 
This embedding is 
$r$-locally planar as every non-facially generated cycle of $G$ 
is either a non-facially generated cycle of $G'$ or not a cycle of $G'$ at all.
In the first case, the cycle has length more than $r$ by the $r$-local planarity of the 
embedding of $G'$. 
Cycles of the second type have length 
at least $r+1$ by definition of $r$-local cutting. So the 
embedding of $G$ has the 
desired properties. 
 
Next assume that  $G$ embeds $r$-locally planarly $r$-nicely in a pseudo-manifold with $v$ being an 
$r$-local 
singularity. We obtain an embedding of the graph $G'$ by replacing the vertex $v$ by its slices 
(formally we delete the vertex $v$ and then take the completion of the remainder. 
The newly 
added points are precisely the slices of $v$). As the vertex $v$ is an  $r$-local 
singularity, this defines an embedding of the graph $G'$. 
This embedding is $r$-nice by construction. 
As every non-facially generated cycle of the 
graph $G'$ is a non-facially generated cycle of the graph $G$, all these cycles have more than 
$r$.
Hence this embedding is $r$-locally planar. 
 \end{proof}

\begin{cor}\label{go_to_blocks}
 A graph $G$ has an $r$-locally planar $r$-nice embedding in a pseudo-surface if and only 
if all its $r$-local blocks have 
$r$-locally planar embeddings in surfaces. 
\end{cor}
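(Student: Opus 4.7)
The plan is to iterate \autoref{cutting_and_embeddings} at each $r$-local cutvertex of $G$, using the structural fact (from \cite{loc2sepr}) that cutting $r$-locally at every $r$-local cutvertex produces precisely the disjoint union of the $r$-local blocks of $G$, which themselves contain no $r$-local cutvertex.

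For the forward implication, I would start with an $r$-locally planar $r$-nice embedding of $G$ in a pseudo-surface $\Sigma$. Because the embedding is $r$-nice, every singularity of $\Sigma$ occurs at an $r$-local cutvertex of $G$. I would then apply \autoref{cutting_and_embeddings} successively at each such cutvertex; each application yields an $r$-locally planar $r$-nice embedding of the cut graph in a pseudo-manifold with strictly fewer singularities. Once no $r$-local cutvertices remain, the cut graph is the disjoint union of the $r$-local blocks of $G$ and the ambient space is a pseudo-manifold without singularities, i.e., a disjoint union of surfaces; each $r$-local block is thus embedded $r$-locally planarly in a surface.

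For the reverse implication, I would start from given $r$-locally planar embeddings of all the $r$-local blocks of $G$ into surfaces. Their disjoint union is an $r$-locally planar (and vacuously $r$-nice) embedding of the disjoint union of the blocks into a pseudo-manifold with no singularities. I would then reverse the iteration: at each $r$-local cutvertex $v$ of $G$, I glue the slices of $v$ together, invoking the backward direction of \autoref{cutting_and_embeddings} to preserve $r$-local planarity and $r$-niceness at each step. After all gluings are performed, the result is an $r$-locally planar $r$-nice embedding of $G$ in a pseudo-manifold; restricting to a connected component of $G$ then yields an embedding in a pseudo-surface.

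The main thing to check carefully is that successive cuts (or gluings) do not interfere with each other and that the iteration actually terminates at the $r$-local block decomposition. This rests on the property that the $r$-local cutvertices of the cut graph correspond exactly to the $r$-local cutvertices of $G$ that have not yet been handled, a structural fact about the $r$-local cutting operation established in \cite{loc2sepr}; granting that, the induction on the number of $r$-local cutvertices is routine.
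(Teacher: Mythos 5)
Your proposal is correct and follows essentially the same route as the paper: an induction (iteration) on the number of $r$-local cutvertices using \autoref{cutting_and_embeddings}, combined with the observation that $r$-local blocks have no $r$-local cutvertices, so their $r$-nice embeddings in pseudo-surfaces are in fact embeddings in genuine surfaces.
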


\begin{proof}
 This statement with `embeddings in surfaces' replaced by `$r$-nice embeddings in 
pseudo-surfaces' follows directly from \autoref{cutting_and_embeddings} by induction on the 
number of $r$-local 
cutvertices. 
Finally note that as $r$-local blocks do not have $r$-local cutvertices, $r$-nice 
embeddings of such graphs in pseudo-surfaces are always embeddings in genuine surfaces. 

\end{proof}

Next we prove the following extension of \autoref{r-local_duality}. 

\begin{thm}\label{r-local_duality2}
Any graph $G$ has an $r$-locally planar $r$-nice embedding in a 
pseudo-surface 
if 
and 
only if the 
$r$-local matroid for $G$ over $\Fbb_2$ is 
co-graphic.
\end{thm}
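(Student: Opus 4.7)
The plan is to bootstrap from the $r$-locally $2$-connected case (\autoref{r-local_duality}) via the block decomposition (\autoref{go_to_blocks}). The bridging fact I need is that the $r$-local matroid of $G$ decomposes as a direct sum of the $r$-local matroids of the $r$-local blocks of $G$, after which both the topological and matroid-theoretic conditions in the theorem will be seen to hold globally iff they hold block by block.

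\medskip

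First I would prove an invariance claim: if $G'$ is obtained from $G$ by $r$-locally cutting a vertex $v$, then $G$ and $G'$ have the same $r$-local matroid. The ground sets agree because $r$-local cutting leaves $E(G)$ untouched. For the circuits, it suffices to show that the short cycles agree. Every cycle $o$ of $G'$ of length at most $r$ is automatically a cycle of $G$. Conversely, let $o$ be a cycle of $G$ with $|o|\le r$ that passes through $v$, and let $a,b$ be the two neighbours of $v$ on $o$. Since $o$ has length at most $r$, every vertex of $o$ lies within distance $r/2$ of $v$, so the $a$--$b$ path $o-v$ is contained in the punctured ball $B_{r/2}(v)-v$; hence $a$ and $b$ lie in the same connected component of $B_{r/2}(v)-v$, i.e.\ in the same slice of $v$. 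Therefore $o$ is also a cycle of $G'$. Iterating this invariance across all $r$-local cutvertices, the $r$-local matroid of $G$ equals the direct sum of the $r$-local matroids of the $r$-local blocks, supported on the induced partition of $E(G)$.

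\medskip

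I would then invoke the elementary fact that a matroid is co-graphic iff each of its direct summands is co-graphic. (The dual of a direct sum is the direct sum of the duals; a graph realising a direct sum of graphic matroids is obtained from disjoint representatives, possibly identified at single vertices.) Combining with \autoref{r-local_duality} applied block-by-block, and with \autoref{go_to_blocks}, the chain of equivalences
$$G \text{ has an } r\text{-locally planar } r\text{-nice embedding} \iff \text{every } r\text{-local block does} \iff \text{every block's } r\text{-local matroid is co-graphic} \iff \text{the } r\text{-local matroid of } G \text{ is co-graphic}$$
delivers the theorem.

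\medskip

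The main obstacle I expect is the invariance claim for $r$-local cutting. The distance and connectivity bookkeeping above is routine once one pins down the definition of $r$-local cutting from \cite{loc2sepr}, but one must be scrupulous that no new short cycles are created and no short cycle is destroyed, and that the resulting edge partition really yields a direct-sum decomposition rather than a mere quotient. With that verified, the remaining steps are purely formal applications of results already proved in the paper.
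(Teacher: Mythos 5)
Your proposal is correct and follows essentially the same route as the paper: decompose $G$ into its $r$-local blocks, observe that $r$-locally cutting at $r$-local cutvertices leaves the $r$-local matroid unchanged so that it becomes the direct sum of the blocks' $r$-local matroids, apply \autoref{r-local_duality} block by block, and conclude with \autoref{go_to_blocks}. Your explicit distance argument for the cutting-invariance is a welcome elaboration of a step the paper only asserts by analogy (citing \cite{loc2sepr}); just make sure it also rules out short cycles of $G'$ passing through two distinct copies of $v$, which follows by the same punctured-ball argument.
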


\begin{proof}
Extending the well-known block-cutvertex theorem, in \cite{loc2sepr} it 
is shown that 
any graph has an edge-disjoint decomposition into its $r$-local blocks.
Each of these $r$-local blocks is $r$-locally $2$-connected. Moreover, the $r$-local matroid 
of $G$ 
is the direct sum over the $r$-local matroids of the blocks; indeed, analogous to the 
fact that cutting at cutvertices does not change the cycle matroid of $G$, locally cutting at 
$r$-local 
cutvertices does not change the $r$-local matroid of $G$. 

Applying \autoref{r-local_duality} to each $r$-local block separately yields that 
all $r$-local blocks have an $r$-locally planar embedding if and only if the $r$-local 
matroid for 
$G$ is 
co-graphic. 
So the statement follows by \autoref{go_to_blocks}.
\end{proof}

\begin{proof}[Proof of \autoref{mainthm_whitney_intro}.]
In the Introduction we abbreviated the `$r$-local matroid for $G$ over $\Fbb_2$' simply by the 
`$r$-local matroid for $G$'. 
The other difference between \autoref{mainthm_whitney_intro} and \autoref{r-local_duality2} is that 
the term `$r$-nice' is omitted in the statement in the Introduction. 
It is an easy exercise to make an embedding in a pseudo-surface $r$-nice (first 
note that the only vertices that can be mapped to singularities are $r$-local cutvertices. Then 
take 
the rotation system of the old embedding and at each $r$-local cutvertex $v$ replace the rotator by 
one rotator for every slice of $v$. This rotator at a slice is obtained from the original 
rotator at $v$ by restricting to the edges incident with the slice. This defines an embedding in 
a pseudo-surface that is $r$-locally planar and $r$-nice by construction.)

So \autoref{mainthm_whitney_intro} is just a restatement of 
\autoref{r-local_duality2}.
\end{proof}

Analogous to \autoref{abstract_duality} and \autoref{r-local_duality2} we prove the following 
oriented analogues.

\begin{thm}\label{abstract_duality_or}
 Let $G$ be a $2$-connected graph, and $S$ be a nonempty subspace of its cycle space over 
$\Fbb_3$ with a cyclic 
generating set.
 Assume $S$ is locally connected and fencing in.
 
 Then $G$ has an $S$-facial embedding into an orientable surface if and only if the $S$-local 
matroid for $G$ over $\Fbb_3$
is co-graphic.
\end{thm}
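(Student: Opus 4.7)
The plan is to mirror the proof of \autoref{abstract_duality}, replacing $\Fbb_2$ by $\Fbb_3$ throughout and replacing ``surface'' with ``orientable surface'' at the input and output of the two key lemmas from Sections~2 and~3. The bridge between orientability and the field $\Fbb_3$ is the example already recorded in Section~2 stating that an embedding all of whose faces are discs is $\Fbb_3$-admissible if and only if it is an embedding in an orientable surface.

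For the forward direction, I would start from an $S$-facial embedding $\iota$ of $G$ in an orientable surface $\Sigma$. Fixing an orientation of $\Sigma$ induces a coherent orientation of each face boundary, and the sum of these oriented face boundaries vanishes over $\Zbb$, hence over $\Fbb_3$; thus $\iota$ is $\Fbb_3$-admissible. Since $S$ is nonempty, has a cyclic generating set, and is fencing in for $\iota$ by hypothesis, all assumptions of \autoref{duality_in_surfaces} with $k=\Fbb_3$ are in place, and the lemma produces a quotient of the dual graph whose cycle matroid equals the dual of the $S$-local matroid over $\Fbb_3$. In particular, that matroid is co-graphic.

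For the backward direction, I would assume the $S$-local matroid over $\Fbb_3$ is co-graphic. Since $G$ is $2$-connected and $S$ is locally connected, \autoref{construct_embedding} with $k=\Fbb_3$ constructs an $S$-facial $\Fbb_3$-admissible embedding $\iota$ of $G$ in a surface, and the ``Moreover'' clause of that lemma guarantees that all faces of $\iota$ are discs. By the characterisation noted in the opening paragraph, such an embedding must be in an orientable surface, completing the proof.

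The main obstacle, such as it is, is conceptual rather than technical: I must verify that the two input lemmas are applied with all their hypotheses correctly checked in the $\Fbb_3$ setting, and that the translation between orientability of the ambient surface and $\Fbb_3$-admissibility of the embedding goes through cleanly in both directions. Because \autoref{duality_in_surfaces} and \autoref{construct_embedding} were proved uniformly for $k \in \{\Fbb_2,\Fbb_3\}$, no new combinatorial or topological arguments are required beyond this bookkeeping.
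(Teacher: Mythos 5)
Your proposal is correct and follows essentially the same route as the paper: run the proof of \autoref{abstract_duality} with $k=\Fbb_3$, invoking \autoref{duality_in_surfaces} and \autoref{construct_embedding}, and use the equivalence between $\Fbb_3$-admissibility (for embeddings with disc faces) and orientability to translate in both directions. Your explicit check that an orientable embedding is $\Fbb_3$-admissible in the forward direction is exactly the bookkeeping the paper leaves implicit.
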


\begin{proof}
 The proof is the same as the proof for \autoref{abstract_duality}, where we take the field $k$ to 
be `$\Fbb_3$' in place of `$\Fbb_2$'. Additionally only note that a $k$-admissible embedding such 
that all faces are disc, defines an orientation of that surface. Hence the embedding $\iota$ 
constructed in \autoref{construct_embedding} is into an oriented surface. 
\end{proof}

\begin{thm}\label{r-local_duality2_or}
 Any graph $G$ has an $r$-locally planar $r$-nice embedding in 
an orientable pseudo-surface if and 
only if the 
$r$-local matroid for $G$ over $\Fbb_3$ is 
co-graphic.
\end{thm}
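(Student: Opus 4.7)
The plan is to mirror the proof of \autoref{r-local_duality2} verbatim, substituting $\Fbb_3$ for $\Fbb_2$ and ``orientable surface'' for ``surface'' at each step, with the necessary extra care for orientability of the gluing.

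First I would prove an oriented analogue of \autoref{r-local_duality}: an $r$-locally 2-connected graph has an $r$-locally planar embedding into an orientable surface if and only if its $r$-local matroid over $\Fbb_3$ is co-graphic. This follows from \autoref{abstract_duality_or} by the same argument used to deduce \autoref{r-local_duality} from \autoref{abstract_duality}: setting $S=S_r$, the set $S$ has a cyclic generating set by definition, is locally connected by \autoref{is_loc_con} (the proof transfers verbatim to $\Fbb_3$, since orthogonality of cycles of bounded length to $\vec{X}$ over $\Fbb_3$ still forces one generator to be non-orthogonal), and is fencing in by \autoref{is_fencing_in} using the $\Fbb_3$-oriented clause of that lemma.

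Next I would invoke the $r$-local block decomposition from \cite{loc2sepr}: $G$ is an edge-disjoint union of its $r$-local blocks, each of which is $r$-locally 2-connected, and the $r$-local matroid of $G$ is the direct sum of the $r$-local matroids of the blocks over $\Fbb_3$ (locally cutting at $r$-local cutvertices does not change the $r$-local matroid, again by the argument of \autoref{r-local_duality2}). A direct sum of matroids is co-graphic if and only if each summand is co-graphic, so the condition on $G$ reduces to the same condition on each block. By the oriented analogue of \autoref{r-local_duality} just established, this is equivalent to each $r$-local block admitting an $r$-locally planar embedding into an orientable surface.

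Finally, I would apply \autoref{go_to_blocks} to assemble these block embeddings into an $r$-locally planar $r$-nice embedding of $G$ in a pseudo-surface, and check that the resulting pseudo-surface is orientable. The gluing construction of \autoref{cutting_and_embeddings} identifies slices of $r$-local cutvertices, which is an identification of finitely many points; the resulting pseudo-manifold has an orientation because we may independently orient each orientable surface corresponding to an $r$-local block, and the only identifications occur at $r$-local singularities, whose neighbourhoods are disjoint unions of discs glued only at their midpoints. The main obstacle is precisely this last verification: ensuring that orientability of the individual block surfaces lifts to orientability of the compact 2-manifold $\Lambda$ of which the pseudo-surface is a quotient. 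This comes down to noting that orientability is a property of the (disjoint) connected components of $\Lambda$, which correspond bijectively to the orientable surface embeddings of the $r$-local blocks produced in the previous paragraph; hence the composite embedding lies in an orientable pseudo-surface, completing the proof.
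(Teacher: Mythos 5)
Your proposal is correct and takes essentially the same route as the paper, whose proof of \autoref{r-local_duality2_or} simply repeats the proof of \autoref{r-local_duality2} with $\Fbb_3$ in place of $\Fbb_2$, i.e.\ via \autoref{abstract_duality_or}, the $r$-local block decomposition and \autoref{go_to_blocks}. Your explicit verification that orientability of the individual block surfaces passes to the assembled pseudo-surface is the only point the paper leaves implicit, and it is a welcome addition rather than a deviation.
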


\begin{proof}
 The proof is the same as the proof for \autoref{r-local_duality2}, where we take the field $k$ to 
be `$\Fbb_3$' in place of `$\Fbb_2$'.
\end{proof}

\section{Concluding remarks}\label{sec:concluding}

An LEW embedding of a graph $G$ is an embedding in a surface such that all faces are strictly 
shorter than the non-contractible cycles of the embedding. 
Every LEW embedding is an $r$-locally planar embedding, where $r$ is the maximum length of a face 
of the embedding.
Locally planar embeddings are more general than LEW 
embeddings in the following ways.
Firstly, faces in locally planar embeddings can have arbitrary size. 
Secondly, while being contractible is a special case of being facially generated, the 
converse is not true. 
So from the point of view of algebraic topology, local planarity is phrased 
in terms of the more general homological notion, while LEW embeddings rely on the more restricted 
homotopical definition.

Of particular interest is the relation between locally planar embeddings and minimum genus 
embeddings. In \cite{thomassen1990embeddings} Thomassen, proved that LEW embeddings are minimum 
genus embeddings. 
\begin{eg}
As explained above, locally planar embeddings include LEW embeddings. Building on this, we 
construct a large class of locally planar embeddings that are not LEW embeddings but still of 
minimum genus. 
For that, take a LEW embedding of a graph $G$. Let $r$ be the maximum length of a face of this 
embedding. 
Pick a face $f$ of the embedding arbitrarily. Pick an arbitrary planar 
graph that has $f$ has a face, and $f$ is a geodesic cycle in that graph. Now glue that planar 
graph along the face $f$ onto the graph $G$. Call that new graph $H$. 
The embedding of the graph $G$ extends to an embedding of the graph $H$ by embedding the new planar 
graph within the face $f$. This embedding is $r$-locally planar. 

And it is a minimum genus embedding of the graph $H$, as the genus of $H$ cannot be smaller than 
that of the subgraph $G$.  
\end{eg}

\begin{oque}
 Can you characterise when locally planar embeddings are minimum genus embeddings?
\end{oque}

Our results provide a polynomial algorithm that computes for every graph $G$ the maximum value of 
$r$ such that $G$ has an $r$-locally planar embedding. Let $n$ denote the number of vertices of 
$G$. If $r\geq n$, then $r=\infty$. As $r$-local planar embedability is a monotone 
property, to compute the maximum value for $r$, we need to check $r$-local embeddability for 
$log(n)$ values (the first being $n/2$, then $n/4$ or $n/2+n/4$ etc). For each particular value of 
$r$ we first compute the $r$-local matroid. 
For that we have to construct in polynomial time a set of cycles of length at most $r$ generating 
all cycles of length at most $r$. This can be done as follows. For any two 
vertices $v$ and $w$ of $G$ of distance $d$, a maximum set of shortest $v$-$w$-path that are 
internally disjoint can be computed in polynomial time. Denote these paths by $P_1,..,P_m$.
Let $Q(v,w,d)$ be the set of cycles $P_1P_2, P_2P_3,...P_{m-1}P_m$. For any edge $e$ of $G$ such 
that there is a vertex $v$ of $G$ such that there are two shortest paths from $v$ to the 
endvertices of $e$ that are internally disjoint, denote by $o(e,v)$ the cycle composed of these two 
paths and the edge $e$. 
It is straightforward to show 
that the set of all the $Q(v,w,d)$ for $d\leq r/2$ and the set of these cycles $o(e,v,d)$ for 
$d\leq (r-1)/2$ generate all cycles of length at most $r$ and can be computed in polynomial time. 
Then it just remains to verify whether the $r$-local matroid is co-graphic, which can be done in 
polynomial time as mentioned in the Introduction. This completes the full description of the 
algorithm.

In the following we give a rough estimate how far locally planar embeddings are from minimum genus 
embeddings. We denote by $def(G)$ the genus deficit of an $r$-locally planar embedding of $G$ 
(that is, the 
difference between the genus of the embedding and 
the optimal 
genus), by $\alpha$ the 
rank 
of the $r$-local matroid for $G$, by $g(G)$ the girth of $G$ and by $E$ the edge-number of $G$. 
\begin{obs}
Assume a connected graph $G$ has an $r$-locally planar embedding. Then:
 \[
def(G)\leq  \alpha+\frac{2E}{g(G)}-E-1
\]
\end{obs}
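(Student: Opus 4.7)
The plan is to compare the given $r$-locally planar embedding with a minimum genus cellular embedding of $G$ via Euler's formula, and then to bound the two face counts using the girth and \autoref{duality_in_surfaces}. Write $F$ for the number of faces of the $r$-locally planar embedding, $F_{\min}$ for the number of faces of a minimum genus cellular embedding, and $\gamma,\gamma_{\min}$ for the corresponding Euler genera $2-\chi$. Euler's formula gives $V-E+F=2-\gamma$ and $V-E+F_{\min}=2-\gamma_{\min}$, so
\[
def(G) \;=\; \gamma-\gamma_{\min} \;=\; F_{\min}-F.
\]

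Next, I would bound $F$ from below and $F_{\min}$ from above. For $F$: since $S_r$ is fencing in by \autoref{is_fencing_in}, \autoref{duality_in_surfaces} produces a quotient $Q$ of the dual graph $H$ of the embedding whose cycle matroid equals the dual $M^*$ of the $r$-local matroid. As $H$ has $F$ vertices and $Q$ is a quotient, $|V(Q)|\le F$; and since $\mathrm{rank}(M^*)=E-\alpha$ equals the rank $|V(Q)|-c(Q)$ of the cycle matroid of $Q$, this yields
\[
F \;\ge\; |V(Q)| \;=\; E-\alpha+c(Q) \;\ge\; E-\alpha+1.
\]
For $F_{\min}$: each face boundary of a minimum genus embedding has length at least $g(G)$, and summing over faces gives $g(G)\,F_{\min}\le 2E$, so $F_{\min}\le 2E/g(G)$. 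Substituting both bounds into the identity for $def(G)$ produces
\[
def(G) \;\le\; \frac{2E}{g(G)} \;-\; (E-\alpha+1) \;=\; \alpha+\frac{2E}{g(G)}-E-1,
\]
as required.

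The main obstacle I foresee is the girth estimate on $F_{\min}$: strictly it requires each face boundary to be a simple cycle, which for a merely connected graph with cut-vertices can fail in a minimum genus embedding. I would handle this either by reducing to the (local) block decomposition in the style of \autoref{go_to_blocks} and summing contributions block by block, or by noting that any facial walk shorter than $g(G)$ must traverse some edge twice and so still contributes doubly to $\sum_f|\partial f|=2E$, which preserves the bound $F_{\min}\le 2E/g(G)$. A minor secondary point is the ``genus'' convention: the computation above is exact in the Euler-genus convention, while in the orientable-genus convention the bound is tighter by a factor of $2$, so the stated inequality holds in either case.
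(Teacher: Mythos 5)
Your proposal is correct and follows essentially the same route as the paper: Euler's formula reduces the deficit to $F_{\min}-F$, the lower bound $F\geq E-\alpha+1$ comes from \autoref{duality_in_surfaces} together with \autoref{is_fencing_in}, and the upper bound $F_{\min}\leq 2E/g(G)$ is the same girth/average-degree estimate on the dual of a minimum genus embedding. Your extra care about the number of components of the quotient and about facial walks that are not simple cycles only tightens details the paper states more briskly.
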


\begin{proof}
Fix an $r$-locally planar embedding of $G$. Let $H$ be the dual graph of that embedding. 
Let $F$ be the number of faces for that embedding; that is, the number of vertices of the graph 
$H$. 
By \autoref{duality_in_surfaces} and \autoref{is_fencing_in}, $H$ has a quotient $H'$ whose cycle 
matroid is the dual of the $r$-local matroid of $G$. So $|V(H')|=E-\alpha+1$. So $F\geq E-\alpha+1$.

Let $H^*$ 
be a dual graph of $G$ in some optimal genus embedding. Let $F^*$ be the number of faces for an 
optimal embedding. Using the definition of the average degree of the graph $H^*$, we estimate: $F^* 
\leq \frac{2E}{g(G)}$.

By Euler's Formula, the genus deficit is equal to $F^*-F$. Plugging in the two expressions for $F$ 
and $F^*$ gives the desired result. 
\end{proof}

Exploring classes of matroids that can arise as $S$-local matroids further is an exciting 
direction for future research. Indeed, inspired by a theorem of Seymour 
\cite{seymour1981recognizing} and their work on the Matroid Minors 
Project, Geelen, Gerards and Whittle introduced the class of quasi-graphic matroids 
\cite{geelen2018quasi}, see \cite{bowler2020describing} for further examples of quasi-graphic 
matroids. 
It seems natural to compare the conditions in \autoref{abstract_duality} to those for  
quasi-graphic matroids. In particular the 
following seems to be of interest.

\begin{oque}
 Are there natural classes of quasi-graphic matroids 
 that are cographic and $S$-local matroids for graphs $G$?
\end{oque}

\bibliographystyle{plain}
\bibliography{literatur}

\end{document}